\documentclass[oneside,english]{amsart}
\pdfoutput=1
\usepackage[T1]{fontenc}
\usepackage[latin9]{inputenc}
\usepackage{amsthm}
\usepackage{amstext}
\usepackage{graphicx}

\makeatletter
\numberwithin{equation}{section}
\numberwithin{figure}{section}
\theoremstyle{plain}
\newtheorem{thm}{Theorem}[section]
  \theoremstyle{remark}
  \newtheorem{rem}[thm]{Remark}
  \theoremstyle{plain}
  \newtheorem{prop}[thm]{Proposition}
  \theoremstyle{plain}
  \newtheorem{lem}[thm]{Lemma}
  \theoremstyle{plain}
  \newtheorem{cor}[thm]{Corollary}
 \theoremstyle{definition}
  \newtheorem{example}[thm]{Example}

\makeatother

\usepackage{babel}

\begin{document}

\title{Intersections of Certain Deleted Digits Sets}

\author{Steen Pedersen and Jason Phillips}

\address{Department of Mathematics, 3640 Colonel Glenn Highway, Wright State University, Dayton OH 45435.}

\email{steen@math.wright.edu}

\email{phillips.50@wright.edu}

\begin{abstract}
We consider some properties of the intersection of deleted digits
Cantor sets with their translates. We investigate conditions on the
set of digits such that, for any \emph{t} between zero and the dimension
of the deleted digits Cantor set itself, the set of translations such
that the intersection has that Hausdorff dimension equal to \emph{t}
is dense in the set \emph{F} of translations such that the intersection
is non-empty. We make some simple observations regarding properties
of the set \emph{F}, in particular, we characterize when \emph{F}
is an interval, in terms of conditions on the digit set. 
\end{abstract}
\maketitle

\section{Introduction}

Let $n\geq3$ be an integer. Any real number $0\leq x\leq1$ can be
written in base $n$ as an $n-$ary expansion \begin{equation}
x=0.x_{1}x_{2}\cdots:=\sum_{k=1}^{\infty}\frac{x_{k}}{n^{k}}\label{eq:n-ary}\end{equation}
where $x_{k}\in\{0,1,\ldots,n-1\}.$ This representation of a real
number $x$ in the interval $[0,1]$ is unique, except\[
0.x_{1}x_{2}\cdots x_{k}=0.x_{1}x_{2}\cdots x_{k-1}y_{k}y_{k+1}\cdots\]
when $x_{k}\neq0,$ $y_{k}=x_{k}-1,$ and $y_{j}=n-1$ for $j>k.$ 

Let $\mathcal{D}=\{d_{1},d_{2},\ldots,d_{m}\}$ be a set of at least
two integers, such that $0=d_{1}<d_{2}<\cdots<d_{m}<n$ and $m<n.$
The set \[
\mathcal{C}=\mathcal{C}_{n,\mathcal{D}}:=\left\{ \sum_{k=1}^{\infty}\frac{x_{k}}{n^{k}}\,\Big|\, x_{k}\in\mathcal{D}\right\} \]
is a \emph{deleted digits Cantor set}. Consequently, $\mathcal{C}_{n,\mathcal{D}}$
is obtained from the set of all $n-$ary representations (\ref{eq:n-ary})
by restricting attention to those $n-$ary representations that only
contain digits from the set $\mathcal{D},$ that is by deleting the
digits not in $\mathcal{D}$ from the set of all potential digits
$\left\{ 0,1,\ldots,n-1\right\} .$  

An \emph{$n-$ary interval} is an interval of the form \[
\left[\frac{k}{n^{j}},\frac{k+1}{n^{j}}\right],\]
where $j\geq0$ is an integer and $k=0,1,\ldots,n^{j}-1.$ Due to
the structure of deleted digits Cantor sets, it is natural to work
with the Minkowski dimension. Let $S$ be a subset of the closed interval
$[0,1].$ Then the \emph{Minkowski dimension} of $S$ is \[
\dim_{\mathrm{M}}S=\lim_{k\to\infty}\frac{\log\mathcal{N}_{j}(S)}{j\log n}\]
where $\mathcal{N}_{j}(S)$ is the minimum number of $n-$ary intervals
of length equal to $1/n^{j}$ needed to cover $S.$ If the limit does
not exists, we can talk about the upper and lower Minkowski dimensions,
obtained by replacing the limit by the limit superior and the limit
inferior, respectively. Minkowski dimension is sometimes called\emph{
Minkowski\textendash{}Bouligand dimension, box dimension, Kolmogorov
dimension, entropy dimension, }or\emph{ limiting capacity}. 

For a subset $S$ of the interval $[0,1],$ let\[
\Lambda_{\beta}(S):=\lim_{\delta\to0}\inf\left\{ \sum_{I\in\mathcal{U}}|I|^{\beta}\right\} \]
where the infimum is over all coverings $\mathcal{U}$ of $S$ by
$n-$ary intervals whose lengths $|I|$ are at most $\delta>0.$ The
\emph{Hausdorff dimension} of $S$ is the number $\dim_{\mathrm{H}}S$
satisfying \begin{align*}
\Lambda_{\beta}(S) & =0\text{ for \ensuremath{\beta>\dim_{\mathrm{H}}S,\text{ and}}}\\
\Lambda_{\beta}(S) & =\infty\text{ for \ensuremath{\beta<\dim_{\mathrm{H}}S.}}\end{align*}
The term Hausdorff dimension is sometimes replaced by \emph{Hausdorff-Besicovitch
dimension, Besicovitch dimension, or fractional dimension.} It is
well-known, and not difficult to see, that this definition of the
Hausdorff dimension agrees with the standard definition, where the
infimum is over all countable covering of $S$ by intervals of lengths
at most $\delta.$ For example, this was established by Besicovitch
\cite{Bes52}, when $n=2.$ See also the book \cite{Fal85} by Falconer. By the definition
of Minkowski dimension, the Hausdorff dimension of $S$ is bounded
above by the (lower) Minkowski dimension of $S.$ Hutchinson  showed \cite{Hut81} both the Minkowski and Hausdorff dimensions of $\mathcal{C}$ equals
the similarity dimension $\log_{n}m.$

Let \[
\mathcal{F}:=\left\{ t\geq0\mid\mathcal{C}\cap\left(\mathcal{C}+t\right)\neq\emptyset\right\} .\]
be that set of positive real numbers $t,$ such that the intersection
of $\mathcal{C}$ and its translate by $t,$ $\mathcal{C}+t:=\{x+t\mid x\in\mathcal{C}\},$
is non-empty. Let $\mathcal{F}_{\alpha}$ be the set of $t$ in $\mathcal{F}$
such that $\mathcal{C}\cap(\mathcal{C}+t)$ has Hausdorff dimension
$\alpha\log_{n}m.$ Under suitable assumptions on $\mathcal{D}$ we
prove that $\mathcal{F}_{\alpha}$ is dense in $\mathcal{F}$ for
any $0\leq\alpha\leq1.$

Our main results are
\begin{thm}
\label{thm:non-uniform} Let $n$ be a positive integer and let $\mathcal{D}=\{d_{1},d_{2},\ldots,d_{m}\}$
be a set of at least two integers, such that $0=d_{1}<d_{2}<\cdots<d_{m}<n-1$
and $2\leq d_{k+1}-d_{k},$ for $k=1,2,\ldots,m-1.$ Let $\mathcal{C}$
be the real numbers of the form $\sum_{k=1}^{\infty}x_{k}/n^{k}$
where each $x_{k}$ is in $\mathcal{D}.$ For each $0\leq\alpha\leq1,$
the set $\mathcal{M}_{\alpha}$ of all $0\leq t\leq1$ such that the
Hausdorff and Minkowski dimensions of $\mathcal{C}\cap(\mathcal{C}+t)$
equals $\alpha\log_{n}m$ is dense in the set $\mathcal{F}$ of all
$0\leq s\leq1$ such that $\mathcal{C}\cap(\mathcal{C}+s)$ is not
the empty set. 
\end{thm}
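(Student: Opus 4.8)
The plan is to analyze $\mathcal{C}\cap(\mathcal{C}+t)$ through the recursion coming from $\mathcal{C}=\bigcup_{d\in\mathcal{D}}\bigl(\tfrac dn+\tfrac1n\mathcal{C}\bigr)$. Write $r(a):=\#\{(d,e)\in\mathcal{D}^2:d-e=a\}$ and $\rho:=\sup\mathcal{C}=d_m/(n-1)$, and recall that $\mathcal{C}-\mathcal{C}$ is symmetric, contained in $[-\rho,\rho]$, and $\mathcal{F}=(\mathcal{C}-\mathcal{C})\cap[0,\infty)$. Decomposing $x=\tfrac dn+\tfrac1n x'$ gives
\[
\mathcal{C}\cap(\mathcal{C}+t)=\bigcup_{(d,e)}\Bigl(\tfrac dn+\tfrac1n\bigl[\mathcal{C}\cap\bigl(\mathcal{C}+(nt+e-d)\bigr)\bigr]\Bigr),
\]
the union over all $(d,e)\in\mathcal{D}^2$ with $nt+e-d\in\mathcal{C}-\mathcal{C}$; since $\mathcal{C}\cap(\mathcal{C}+w)$ and $\mathcal{C}\cap(\mathcal{C}-w)$ are translates of each other, residuals may be tracked by absolute value. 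Iterating presents $\mathcal{C}\cap(\mathcal{C}+t)$ as a set coded by ``admissible'' strings, with $\mathcal{N}_j\bigl(\mathcal{C}\cap(\mathcal{C}+t)\bigr)$ equal, up to a bounded factor from $n$-ary rational overlaps, to the number of distinct admissible length-$j$ first-coordinate strings. The hypotheses enter through four facts: $r(0)=m$ and $r(1)=0$ (consecutive digits of $\mathcal{D}$ differ by at least $2$), $r(d_m)=1$ and $r(d_m+1)=0$ ($d_m=\max\mathcal{D}$), and $\rho<1$ (since $d_m\le n-2$).

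From these I extract two ``gadgets'', valid for the residuals $\xi=0.\tau_i\tau_{i+1}\cdots$ with digits in $\{0,d_m\}$ that will arise below. If $\xi$ begins with a $0$ then $n\xi\in[0,1)$, the only integer $a\in\mathcal{D}-\mathcal{D}$ with $n\xi-a\in\mathcal{C}-\mathcal{C}$ is $a=0$ (the candidates are $0,1$ and $1\notin\mathcal{D}-\mathcal{D}$), and it is realized by the $m$ pairs $(d,d)$: reading that $0$ replaces $\xi$ by the shift $n\xi$, multiplies the branching by exactly $m$, and lets the first coordinate range over all of $\mathcal{D}$. If $\xi$ begins with $d_m$ then $\xi\le(d_m+\rho)/n$, the only admissible $a$ is $d_m$, realized by $(d_m,0)$, and reading $d_m$ shifts the residual and multiplies the branching by exactly $1$. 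Secondly, a single step sends a residual with expansion $0.c_1c_2\cdots$ — and equally its ``reflection'' $1-0.c_1c_2\cdots$ — into $\{0.c_2c_3\cdots,\,1-0.c_2c_3\cdots\}$; so after $N$ steps every surviving residual equals $\xi_N:=0.c_{N+1}c_{N+2}\cdots$ or $1-\xi_N$, and when $\xi_N$ begins with a $0$ one has $1-\xi_N\ge 1-n^{-1}\rho>\rho$, whence $1-\xi_N\notin\mathcal{C}-\mathcal{C}$ and only $\xi_N$ survives.

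Now fix $s\in\mathcal{F}$ and $\varepsilon>0$. For $\alpha=1$: approximate $s=x_0-y_0$ by $\tilde x-\tilde y$, the difference of the length-$N$ truncations of $x_0,y_0$ (same tail $\overline{0}$); a direct check shows $\mathcal{C}\cap\bigl(\mathcal{C}+(\tilde x-\tilde y)\bigr)$ contains an affine copy of $\mathcal{C}$, so both dimensions equal $\log_n m$. For $\alpha<1$ and $s>0$: write $s=x_0-y_0$ with $x_0=0.p_1p_2\cdots$, $y_0=0.q_1q_2\cdots$, and set $\tilde y:=\sum_{j\le N}q_jn^{-j}$, $\tilde x:=\sum_{j\le N}p_jn^{-j}+n^{-N}\tau$, $t:=\tilde x-\tilde y$, where $\tau$ is a $\{0,d_m\}$-sequence that begins with a $0$ and whose block lengths are chosen so that the proportion $k_j/j$ of $0$'s among its first $j$ digits tends to $\alpha$ (slowly growing blocks prevent oscillation; for $\alpha=0$ take $\tau=0\,d_m^{M_1}0\,d_m^{M_2}\cdots$ with $M_i\to\infty$). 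Then $\tilde x,\tilde y\in\mathcal{C}$, so $t\in\mathcal{C}-\mathcal{C}$; for $N$ large, $t>0$ and $|t-s|\le 2\rho n^{-N}<\varepsilon$, so $t\in\mathcal{F}$; and since $t>0$ forces $K:=\sum_{j\le N}(p_j-q_j)n^{N-j}$ to be a nonnegative integer, the base-$n$ expansion of $t$ is $0.c_1\cdots c_N$ followed by $\tau$. Running the recursion, the first $N$ steps yield some $B\ge 1$ surviving length-$N$ first-coordinate strings, \emph{all} with residual $\xi_N=0.\tau$ (the reflected residual is dead, $\tau$ beginning with $0$), and then the two gadgets give $\mathcal{N}_{N+j}\bigl(\mathcal{C}\cap(\mathcal{C}+t)\bigr)=B\,m^{k_j}$. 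Hence $\dim_{\mathrm{M}}\bigl(\mathcal{C}\cap(\mathcal{C}+t)\bigr)=\lim_j\frac{\log_n B+k_j\log_n m}{N+j}=\alpha\log_n m$; and $\dim_{\mathrm{H}}\le\underline{\dim}_{\mathrm{M}}=\alpha\log_n m$ is automatic, while the measure splitting evenly at every branch gives a level-$(N+j)$ cylinder mass $B^{-1}m^{-k_j}$ and diameter $n^{-(N+j)}$, so since $k_j/j\to\alpha$ the mass distribution principle yields $\dim_{\mathrm{H}}\ge\beta$ for every $\beta<\alpha\log_n m$. (The degenerate case $s=0$, $\alpha<1$, uses $t=n^{-N}\tau\in\mathcal{C}\subseteq\mathcal{F}$ instead.) This proves $\mathcal{M}_\alpha$ is dense in $\mathcal{F}$.

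The step I expect to be the main obstacle is the pair of structural facts in the second paragraph: showing that under the hypotheses on $\mathcal{D}$ the branching multiplicities for the digits $0$ and $d_m$ are \emph{exactly} $m$ and $1$ with no stray admissible differences, and that the residual after any admissible length-$N$ prefix of the chosen $t$ is forced to be the shift $\xi_N$ because its reflected partner falls outside $\mathcal{C}-\mathcal{C}$. Once those are in hand, choosing the block lengths of $\tau$, identifying $\mathcal{N}_j$ with the count of coding strings, and carrying out the mass distribution estimate are all routine.
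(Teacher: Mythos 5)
Your proof is correct, and its core is the same as the paper's: you append to a truncation a tail over the alphabet $\{0,d_m\}$ whose frequency of $0$'s tends to $\alpha$, and you use the two hypotheses exactly where the paper does --- the separation condition to make the digit $0$ branch exactly $m$-fold (since $1\notin\mathcal{D}-\mathcal{D}$) and $d_m<n-1$ to make $d_m$ branch exactly once and to kill the ``offset'' branches. Your two flagged structural facts do check out: with $\rho=d_m/(n-1)<1$ the only integer differences compatible with a residual in $[0,\rho/n]$ (resp.\ $[d_m/n,(d_m+\rho)/n]$) are $0,1$ (resp.\ $d_m,d_m+1$), and $1,d_m+1\notin\mathcal{D}-\mathcal{D}$; and $1-\xi_N\ge 1-\rho/n>\rho$ is equivalent to $\rho<n/(n+1)$, which follows from $d_m\le n-2$. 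Where you genuinely diverge from the paper is in two places, both simplifications. First, the paper tracks ``interval / potential interval / potentially empty'' cases geometrically and needs a three-way case analysis at level $k$ (including the extra move $x_{k+1}=n-d_m$ to convert a potential interval case into an interval case, and a contradiction argument for the potentially-empty-only case); by rounding the \emph{pair} $(x_0,y_0)$ with $s=x_0-y_0$ rather than $s$ itself, you get $t=\tilde x-\tilde y\in\mathcal{C}-\mathcal{C}$ for free, and the choice $\tau_1=0$ forces every surviving level-$N$ branch into the ``interval case'' (reflected residual dead), so the case analysis disappears. Second, for $\dim_{\mathrm H}\ge\alpha\log_n m$ you invoke the mass distribution principle on the evenly split branching measure, whereas the paper runs an Eggleston-style counting argument against an arbitrary $n$-ary cover; the two are interchangeable here and yours is shorter to write down. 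The only small point to make explicit in a final write-up is one you did not flag: distinct surviving first-coordinate strings give distinct $n$-ary intervals, but a point of the intersection may sit on the common boundary of two of them, so $\mathcal{N}_j$ equals the string count only up to a factor of $2$ --- harmless for the dimension computation.
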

When $\mathcal{C}$ is the triadic Cantor set, that is $n=3$ and
$\mathcal{D}=\{0,2\},$ the conclusions of Theorem \ref{thm:non-uniform}
were established by Davis and Hu \cite{DaHu95} and by a different method by Nekka and Li 
\cite{NeLi02}. Of course, this does not follow from Theorem \ref{thm:non-uniform}.
To recover the conclusions of Theorem \ref{thm:non-uniform} for the
triadic Cantor set we consider a class of {}``uniform'' deleted
digits Cantor sets. 

A deleted digits Cantor sets is \emph{uniform} \cite{DaTi08}, \cite{Li-Nekka-04},
if there is an integer $d\geq2,$ such that $d_{j}=d(j-1),$ for $j=1,2,\ldots,m.$
If $d_{m}=n-1,$ then we cannot apply Theorem \ref{thm:non-uniform}.
However, the additional structure on the digit set $\mathcal{D},$
imposed by assuming uniformity, allows us to establish the conclusions
of Theorem \ref{thm:non-uniform} in this case. In fact, these conclusions
hold under slightly weaker assumptions. 
\begin{thm}
\label{thm:uniform} Let $n$ be a positive integer and let $\mathcal{D}=\{d_{1},d_{2},\ldots,d_{m}\}$
be a set of at least two integers, such that $0=d_{1}<d_{2}<\cdots<d_{m}<n.$
Suppose there is an integer $h>1,$ such that each digit $d_{j}$
is an integral multiple of $h.$ Let $\mathcal{C}$ be the real numbers
of the form $\sum_{k=1}^{\infty}x_{k}/n^{k}$ where each $x_{k}$
is in $\mathcal{D}.$ For each $0\leq\alpha\leq1,$ the set $\mathcal{M}_{\alpha}$
of all $0\leq t\leq1$ such that the Hausdorff and Minkowski dimensions
of $\mathcal{C}\cap(\mathcal{C}+t)$ equals $\alpha\log_{n}m$ is
dense in the set $\mathcal{F}$ of all $0\leq s\leq1$ such that $\mathcal{C}\cap(\mathcal{C}+s)$
is not the empty set. 
\end{thm}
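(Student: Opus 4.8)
\emph{Strategy.} The plan is to mimic the proof of Theorem~\ref{thm:non-uniform}, so I only describe the adaptation needed and isolate the one genuinely new point. First I would observe that the divisibility hypothesis is precisely the statement $h:=\gcd(\mathcal D\setminus\{0\})\ge2$, and that it permits a harmless normalization: since $\mathcal C_{n,\mathcal D}=h\,\mathcal C_{n,\mathcal D/h}$, we have $\mathcal C\cap(\mathcal C+t)=h\bigl(\mathcal C_{n,\mathcal D/h}\cap(\mathcal C_{n,\mathcal D/h}+t/h)\bigr)$ and $\mathcal F(\mathcal C)=h\,\mathcal F(\mathcal C_{n,\mathcal D/h})$, so, dimension being scale invariant, it suffices to prove the theorem for the digit set $\mathcal D/h$, whose largest element is $d_m/h<n/2$. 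Thus \emph{we may assume henceforth that every digit of $\mathcal D$ is smaller than $n/2$} --- at the cost of no longer assuming that consecutive digits differ by at least $2$, and this trade-off is exactly why Theorem~\ref{thm:uniform} is not a consequence of Theorem~\ref{thm:non-uniform}.

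\emph{Symbolic picture and two elementary moves.} Next I would record the symbolic description: for $t=0.t_1t_2\cdots$ in base $n$, a point $x=0.x_1x_2\cdots\in\mathcal C$ lies in $\mathcal C+t$ iff there are digits $y_k\in\mathcal D$ and borrow symbols $b_0=0,b_1,b_2,\ldots\in\{0,1\}$ with $x_k-t_k-b_k=y_k-n\,b_{k-1}$ for all $k\ge1$; so $\mathcal C\cap(\mathcal C+t)$ is governed by a finite-state system with states $b_k$, $t\in\mathcal F$ iff some admissible infinite run spells $t_1t_2\cdots$, and the number $\mathcal N_j$ of generation-$j$ $n$-ary intervals meeting $\mathcal C\cap(\mathcal C+t)$ equals the number of cylinders of $\mathcal C\cap(\mathcal C+t)$ surviving to level $j$. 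Because every digit of $\mathcal D$ is below $n/2$, the state $0$ (no pending borrow) is reachable in one step from state $1$ (feed the digit $n-d_2$, taking $x_k=0$, $y_k=d_2$), and state $0$ carries two loops: a \emph{full} loop, entered by feeding the digit $0$, under which all $m$ choices $x_k\in\mathcal D$ (with $y_k=x_k$, $b_k=0$) survive and the state returns to $0$, so each surviving cylinder splits into $m$; and a \emph{degenerate} loop, entered by feeding the digit $d_m$, under which only $x_k=d_m$, $y_k=0$, $b_k=0$ survives, so surviving cylinders do not split --- this is translation by the right endpoint $d_m/(n-1)$ of $\mathcal C$, for which $\mathcal C\cap(\mathcal C+t)$ is a single point. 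One must also check that the competing extensions, which would pass to state $1$, die immediately, because from state $1$ neither the digit $0$ nor the digit $d_m$ admits any extension once all digits are below $n/2$.

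\emph{Interpolation and the two dimensions.} With the two loops in hand the interpolation is standard. Given $t_0\in\mathcal F$, $\varepsilon>0$ and $\alpha\in[0,1]$, I would pick $N$ with $n^{-N}<\varepsilon$, take $w$ to be the first $N$ digits of an admissible $n$-ary expansion of $t_0$, let $u$ (of length $\le1$) steer the state reached after $w$ to state $0$, and let $t$ have input $w\,u$ followed by the concatenation over $\ell\ge1$ of $p_\ell$ full loops then $q_\ell$ degenerate loops, with $p_\ell,q_\ell\to\infty$, $p_\ell+q_\ell=o\bigl(\sum_{i<\ell}(p_i+q_i)\bigr)$ and $\sum_{i\le\ell}p_i\big/\sum_{i\le\ell}(p_i+q_i)\to\alpha$. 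Then $t\in\mathcal F$ and $|t-t_0|\le n^{-N}<\varepsilon$; writing $a_j$ for the number of full-loop positions among the first $j$ digits past $w\,u$, the block choice forces $a_j/j\to\alpha$, and since $\mathcal N_j\asymp m^{a_j}$ (up to a bounded index shift and a $j$-independent multiplicative constant) we get $\dim_{\mathrm M}\bigl(\mathcal C\cap(\mathcal C+t)\bigr)=\lim_j\tfrac{a_j}{j}\log_n m=\alpha\log_n m$, the limit genuinely existing. For the Hausdorff dimension, since $\dim_{\mathrm H}\le\dim_{\mathrm M}$ it suffices to bound below, and I would put on $\mathcal C\cap(\mathcal C+t)$ the probability measure $\mu$ that is uniform on the $\asymp m^{a_j}$ surviving level-$j$ cylinders for every $j$ --- consistent because a full step splits each surviving cylinder into $m$ equal pieces and a degenerate step leaves it whole --- so each level-$j$ $n$-ary interval charged by $\mu$ has mass $\asymp m^{-a_j}$, and, $a_j$ growing by at most $1$ per step, the mass-distribution principle yields $\dim_{\mathrm H}\bigl(\mathcal C\cap(\mathcal C+t)\bigr)\ge\lim_j\tfrac{a_j}{j}\log_n m=\alpha\log_n m$. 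Hence both dimensions equal $\alpha\log_n m$.

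\emph{Where the difficulty is.} The interpolation arithmetic and the mass-distribution estimate are routine. The hard part will be the second step: verifying that after the normalization --- where all one retains is $d_m<n/2$, the gap hypothesis of Theorem~\ref{thm:non-uniform} having been given up --- the finite-state system still has state $0$ reachable from every state occurring on an admissible run and still carries the full and degenerate loops with exactly those survival counts. In Theorem~\ref{thm:non-uniform} these structural facts rested on ``gaps $\ge2$ and $d_m<n-1$''; the content of Theorem~\ref{thm:uniform} is that the single inequality ``$d_m<n/2$'', which the dilation $\mathcal C=h\,\mathcal C_{n,\mathcal D/h}$ always supplies, does the job, and that is precisely where the divisibility hypothesis is spent.
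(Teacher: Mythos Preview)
Your route works in outline but is needlessly indirect, and one step is not quite true as stated.

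The key observation you missed is that the divisibility hypothesis already forces the separation condition of Theorem~\ref{thm:non-uniform}: distinct multiples of $h\ge 2$ differ by at least $h\ge 2$, so $d_{j+1}-d_j\ge 2$ holds for the \emph{original} digit set $\mathcal D$. Thus all of Section~\ref{sec:Preliminaries} and Proposition~\ref{pro:Main-Step} apply to $\mathcal D$ with no rescaling, and the only obstruction to invoking Theorem~\ref{thm:non-uniform} verbatim is the possible failure of $d_m<n-1$. Your sentence ``this trade-off is exactly why Theorem~\ref{thm:uniform} is not a consequence of Theorem~\ref{thm:non-uniform}'' misidentifies the issue. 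The paper accordingly assumes $d_m=n-1$ and reruns the proof of Theorem~\ref{thm:non-uniform} with two small changes: first, because every difference of digits is a nonzero multiple of $h$, an interval of $\mathcal C_k$ in the interval case can never simultaneously be in a potential case, so the preliminary ``set $x_{k+1}=0$'' cleanup is unnecessary; second, if after truncation one is left with only potentially-empty cases, then necessarily $y=\lfloor y\rfloor_k$, and passing to the other $n$-ary expansion of $y$ (tail of $(n-1)$'s) restores the interval case. That is the whole proof.

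Your rescaled borrow-automaton argument can be made to work, but the assertion that competing state-$1$ branches ``die immediately, because from state $1$ neither the digit $0$ nor the digit $d_m$ admits any extension once all digits are below $n/2$'' fails in the boundary situation $2d_m=n-1$ (after rescaling), equivalently $h=2$, $n$ odd, original $d_m=n-1$: from state $1$ with input $d_m$ one has $x-y=d_m+b-n$, and $b=1$, $x=0$, $y=d_m$ is admissible, so the branch persists in state $1$ through an entire $d_m$-block. This does not spoil $\mathcal N_j\asymp m^{a_j}$, because the state-$1$ population is always at most a fixed multiple of $m^{-1}$ times the state-$0$ population and is annihilated at the first $0$ of the next block; but your write-up would need to say this rather than claim immediate extinction. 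For the same reason your measure $\mu$ should be supported on the state-$0$ cylinders only (where the branching is exactly $m$ or $1$), not on all surviving cylinders, for the consistency you invoke to hold.
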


In Section \ref{sec:Preliminaries} we show how the intersections
$\mathcal{C}\cap(\mathcal{C}+t)$ can be understood in term of a deleted
intervals construction. This is used in Section \ref{sec:A-Stepping-Stone}
to show that if $\mathcal{D}$ satisfies the separation condition
$d_{j+1}-d_{j}\geq2,$ then for any $0\leq\alpha\leq1,$ there is
a $t$ such that $\mathcal{C}\cap(\mathcal{C}+t)$ has dimension $\alpha\log_{n}m.$
The results in Sections \ref{sec:Preliminaries} and \ref{sec:A-Stepping-Stone}
are then used in Sections \ref{sec:Proof-of-Theorem Main} and \ref{sec:Proof-of-Theorem-uniform}
to establish Theorems \ref{thm:non-uniform} and \ref{thm:uniform}
respectively. In Section \ref{sec:Geometry-of-F} we investigate the
geometry of $\mathcal{F},$ in particular, we obtain a characterization
of when $\mathcal{F}$ is an interval in terms of a property of the
digit set $\mathcal{D}.$ And as an application of our results, we
construct everywhere discontinuous functions mapping the interval
$[0,1]$ into itself. In Section \ref{sec:Concluding-Remarks} we
investigate the necessity of the conditions imposed on the digit set
$\mathcal{D}$ in Theorem \ref{thm:non-uniform}. In Section \ref{sec:Open-Questions}
we state some questions related to the results obtained in this paper. 

For background information on fractal sets and their dimensions we
refer the reader to the book \cite{Fal85} by Falconer. Motivations, including potential
applications in physics, for studying the problems considered in this
paper can, for example, be found in papers by Davis and Hu \cite{DaHu95}, 
Li and Nekka \cite{Li-Nekka-04}, and Dai and Tian \cite{DaTi08}.

\section{Preliminaries\label{sec:Preliminaries}}

The purpose of this section is to show how the deleted intervals construction
of $\mathcal{C}$ can be used to analyse $\mathcal{C}\cap(\mathcal{C}+t).$
These observations form the basis for our proof of Theorem \ref{thm:non-uniform}.

\subsection{\label{sub:Constructions-of-deleted}Constructions of deleted digits
Cantor sets}

Let \[\mathcal{D}=\{d_{1},d_{2},\ldots,d_{m}\}\]
be a set of at least two distinct integers, the set of \emph{digits},
such that $0=d_{1}<d_{2}<\cdots<d_{m}<n.$ The corresponding \emph{deleted
digits Cantor set} $\mathcal{C}$ is the set of $n-$ary real numbers
in $[0,1]$ that can be constructed using only digits from the digit
set $\mathcal{D},$ that is \[
\mathcal{C}=\left\{ 0.x_{1}x_{2}\cdots\,\Big|\, x_{j}\in\mathcal{D}\right\} .\]

\subsubsection{Self-similarity construction of $\mathcal{C}$ }

Let $S_{j}(x):=(x+d_{j})/n$ for $j=1,\ldots,m.$ Let $\mathcal{C}_{0}:=[0,1],$
and inductively\[
\mathcal{C}_{k+1}:=\bigcup_{j=1}^{m}S_{j}(\mathcal{C}_{k}).\]
Then $\mathcal{C}_{k}=\{0.x_{1}x_{2}\ldots\mid x_{j}\in\mathcal{D}\text{ for }j\leq k\}$
is the set of real numbers in the interval $[0,1]$ that admit an
$n-$ary representation whose first $k$ digits are chosen from the
digit set $\mathcal{D}.$ Consequently, $\mathcal{C}=\bigcap_{k=0}^{\infty}\mathcal{C}_{k}.$
Furthermore, for each $k,$ the set $\mathcal{C}_{k}$ consists of
$m^{k}$ closed intervals each of length $1/n^{k}$ and these intervals
are $n-$ary intervals with disjoint interiors.

\subsubsection{Retained/deleted intervals construction of $\mathcal{C}$ }

If $I=[a,b]$ is a closed interval, we can consider the partition,\[
I_{j}:=\left[a+\frac{j-1}{n}(b-a),a+\frac{j}{n}(b-a)\right],\]
$j=1,2,\ldots,n,$ of $I$ into $n$ closed subintervals of equal
length. The subset of $I,$ obtained by retaining the intervals in
the partition corresponding to digits in $\mathcal{D},$ that is the
set \[
\bigcup_{j=1}^{m}\left[a+\frac{d_{j}}{n}(b-a),a+\frac{d_{j}+1}{n}(b-a)\right]\]
is a \emph{refinement} of the interval $[a,b].$ With this terminology
$\mathcal{C}_{k+1}$ is obtained from $\mathcal{C}_{k}$ be refining
each interval in $\mathcal{C}_{k}.$

\subsection{Investigating $\mathcal{C}\cap\left(\mathcal{C}+x\right)$\label{sub:Investigating}}

In the remainder of this section we will assume $\mathcal{D}$ satisfies
the \emph{separation condition} $2\leq d_{k+1}-d_{k}$ for $k=1,2,\ldots,m-1.$
Note that \[
\mathcal{C}\cap\left(\mathcal{C}+x\right)=\bigcap_{k=0}^{\infty}\left(\mathcal{C}_{k}\cap\left(\mathcal{C}_{k}+x\right)\right).\]
For $x=0.x_{1}x_{2}\cdots$ let $\left\lfloor x\right\rfloor _{k}$
denote the truncation to the first $k$ places, that is, \[
\left\lfloor x\right\rfloor _{k}:=0.x_{1}x_{2}\cdots x_{k}.\]
If $x$ admit a finite $n-$ary representation $\left\lfloor x\right\rfloor _{k}$
depends on which $n-$ary representation is chosen. We will consider
$\mathcal{C}_{k}\cap(\mathcal{C}_{k}+\left\lfloor x\right\rfloor _{k})$
in place of $\mathcal{C}_{k}\cap\left(\mathcal{C}_{k}+x\right),$
since both $\mathcal{C}_{k}$ and $\mathcal{C}_{k}+\left\lfloor x\right\rfloor _{k}$
consists of $n-$ary intervals of lengths $1/3^{k}.$ 

Below, \emph{an interval in} $\mathcal{C}_{j},$ is short for an $n-$ary
interval in $\mathcal{C}_{j}$ of length $1/n^{j}$, that is one of
the interval obtained by applying the refinement process. A similar
convention applies to the term \emph{an interval in} $\mathcal{C}_{j}+y.$ 

We will investigate how $\mathcal{C}_{k+1}\cap(\mathcal{C}_{k+1}+\left\lfloor x\right\rfloor _{k+1})$
is related to $\mathcal{C}_{k}\cap(\mathcal{C}_{k}+\left\lfloor x\right\rfloor _{k})$
for $k\geq0.$ Recall that $\mathcal{C}_{k+1}$ is obtainied from
$\mathcal{C}_{k}$ by refining each interval in $\mathcal{C}_{k}.$
Consequently, $\mathcal{C}_{k+1}+\left\lfloor x\right\rfloor _{k+1}$
is obtained from $\mathcal{C}_{k}+\left\lfloor x\right\rfloor _{k}$
by refining each interval in $\mathcal{C}_{k}+\left\lfloor x\right\rfloor _{k}$
and then translating the resulting intervals to the right by $x_{k+1}/n^{k+1}.$ 

Let $I$ be one of the intervals in $\mathcal{C}_{k}.$ We will consider
what happens to $I$ as we \emph{transition} from $\mathcal{C}_{k}\cap(\mathcal{C}_{k}+\left\lfloor x\right\rfloor _{k})$
to $\mathcal{C}_{k+1}\cap(\mathcal{C}_{k+1}+\left\lfloor x\right\rfloor _{k+1}).$
Since $\left\lfloor x\right\rfloor _{k}$ is an integral multiple
of $1/n^{k},$ the intervals in $\mathcal{C}_{k}+\left\lfloor x\right\rfloor _{k}$
either coincides with intervals in $\mathcal{C}_{k},$ they have one
or both endpoints in common with intervals in $\mathcal{C}_{k},$
or are at least $1/n^{k}$ units away from any interval in $\mathcal{C}_{k}.$
Hence there are four possibilities for the interval $I$ to consider. 
\begin{itemize}
\item $I$ is in the \emph{interval case}, if there an interval $J$ in
$\mathcal{C}_{k}+\left\lfloor x\right\rfloor _{k}$ such that $I=J.$
\item $I$ is in the \emph{potential interval case}, if there is an interval
$J$ in $\mathcal{C}_{k}+\left\lfloor x\right\rfloor _{k}$ such that
the left-hand endpoint of $J$ is the right-hand endpoint of $I.$
\item $I$ is in the \emph{potentially empty case}, if there is an interval
$J$ in $\mathcal{C}_{k}+\left\lfloor x\right\rfloor _{k}$ such that
the left-hand endpoint of $J$ is the right-hand endpoint of $I.$
\item $I$ is in the \emph{empty case}, if $I$ does not intersect any interval
in $\mathcal{C}_{k}+\left\lfloor x\right\rfloor _{k}.$ 
\end{itemize}
It is possible for $I$ to be simultaneously be in the potential interval
case and the potentially empty case. Due to the separation condition
it is not possible for an interval in $\mathcal{C}_{k}$ to simultaneously
be in the interval case and the potential interval case, or simultaneously
in the interval case and the potentially empty case. 

For any $x$ \begin{equation}
\mathcal{C}\cap\left(\mathcal{C}+x\right)=\bigcap_{k=0}^{\infty}\bigcup I,\label{eq:IntervalDescription}\end{equation}
where the union is over the intervals in $\mathcal{C}_{k}$ that are
not in the empty case.

We will describe the four cases above in more detail, under the assumption
that  $x=0.x_{1}x_{2}\cdots$ does not terminate in repeating $0$'s
or in repeating $n-1$'s, equivalently, $0<x-\left\lfloor x\right\rfloor _{k}<1/n^{k}$
for all $k,$ in particular, we will see that we can exclude the intervals
in $\mathcal{C}_{k}$ that are in the potentially empty case from
the union in (\ref{eq:IntervalDescription}).

\subsubsection{Suppose $I$ is in the interval case}

Let $J$ be the interval in $\mathcal{C}_{k}+\left\lfloor x\right\rfloor _{k}$
such that $I=J.$ Then $I\cap\left(J+x-\left\lfloor x\right\rfloor _{k}\right)$
is an interval of length $\frac{1}{n^{k}}-\left(x-\left\lfloor x\right\rfloor _{k}\right)$
hence the refinement/translation process applied to the intervals
$I$ and $J$ may lead to points in $\mathcal{C}\cap\left(\mathcal{C}+x\right).$

\subsubsection{Suppose $I$ is in the potential interval case}

Let $J$ be the interval in $\mathcal{C}_{k}+\left\lfloor x\right\rfloor _{k}$
such that is the left-hand endpoint of $J$ is the right-hand endpoint
of $I.$ Then $I\cap\left(J+x-\left\lfloor x\right\rfloor _{k}\right)$
is an interval of length $x-\left\lfloor x\right\rfloor _{k}$ hence
the refinement/translation process applied to the intervals $I$ and
$J$ may lead to points in $\mathcal{C}\cap\left(\mathcal{C}+x\right).$

\subsubsection{Suppose $I$ is in the potentially empty case}

Let $J$ be the interval in $\mathcal{C}_{k}+\left\lfloor x\right\rfloor _{k}$
such that the right-hand endpoint of $J$ is the left-hand endpoint
of $I.$ Since $0<x-\left\lfloor x\right\rfloor _{k}$ we have $I\cap\left(J+x-\left\lfloor x\right\rfloor _{k}\right)=\emptyset$,
so this intersection does not lead to points in $\mathcal{C}\cap\left(\mathcal{C}+x\right).$

\subsubsection{Suppose $I$ is in the empty case}

Then $I\cap\left(\mathcal{C}_{k}+\left\lfloor x\right\rfloor _{k}\right)=\emptyset.$
Since the interval $I$ is at least $1/n^{k}$ units away from any
interval $\mathcal{C}_{k}+\left\lfloor x\right\rfloor _{k}$ in and
$x-\left\lfloor x\right\rfloor _{k}<1/n^{k}$ we have $I\cap\left(\mathcal{C}_{k}+x\right)=\emptyset.$
So $I$ does not contribute points to $\mathcal{C}\cap\left(\mathcal{C}+x\right).$ 
\begin{rem}
If $I$ is any interval in $\mathcal{C}_{k}$ and $J$ is any interval
in $\mathcal{C}_{k}+\left\lfloor x\right\rfloor _{k}$ disjoint from
$I,$ then $I$ and $J+x-\left\lfloor x\right\rfloor _{k}$ are also
disjoint. The reason for this is that the separation condition $d_{j+1}-d_{j}\geq2$
implies the distance between intervals $I$ and $J$ is an integral
multiple of $1/n^{k}$ and $0\leq x-\left\lfloor x\right\rfloor _{k}<1/n^{k}.$
Hence, when we consider $\mathcal{C}\cap\left(\mathcal{C}+x\right),$
we can ignore intervals $J$ that do not occur in the first two cases
above. (The first three cases, if we allow $x$ to admit a finite
$n-$ary representation.)
\end{rem}

\subsection{A Description of $\mathcal{F}$}

The following description of $\mathcal{F}$ is useful below. Let $x$
be an element of $\mathcal{F}$ such that $0<x<1.$

\subsubsection{Suppose $x$ does not have a finite $n-$ary expansion}

Then $0<x-\left\lfloor x\right\rfloor _{k}<1/n^{k}$ for all $k,$
hence we can apply the analysis at the end of sub-section \ref{sub:Investigating}.
In particular, for all $k,$ at least one of the intervals in $\mathcal{C}_{k}$
will either be in the interval case or in the potential interval case.

\subsubsection{Suppose $x$ has a finite $n-$ary expansion}

Then we can write $x=0.x_{1}x_{2}\cdots x_{k}$ where $x_{k}\neq0.$
Then \[
\mathcal{C}\cap\left(\mathcal{C}+x\right)=\bigcap_{j=k}^{\infty}\left(\mathcal{C}_{j}\cap\left(\mathcal{C}_{j}+\left\lfloor x\right\rfloor _{k}\right)\right),\]
since $x=\left\lfloor x\right\rfloor _{k}.$ Consequently, if one
of the intervals in $\mathcal{C}_{k}$ is in the interval case, then
repeated refinement of that interval leads to a subset of $\mathcal{C}\cap\left(\mathcal{C}+x\right)$
that is similar to $\mathcal{C}.$ 

On the other hand, if one of the intervals $I$ in $\mathcal{C}_{k}$
is in the potential interval case or in the potentially empty case,
then there is an interval $J$ in $\mathcal{C}_{k}+x$ such that $I\cap J$
contains exactly one point, $y$ say. If $d_{m}=n-1,$ then $y$ will
be contained in the intersection of the refinements of $I$ and $J.$
Hence $y$ will be a point in $\mathcal{C}\cap\left(\mathcal{C}+x\right)$
and any other point in $\mathcal{C}\cap\left(\mathcal{C}+x\right)$
will be at least $1/n^{k}$ units away from $y.$ On the other hand,
if $d_{m}<n-1,$ then the refinements of $I$ and $J$ will not intersect.
In particular, $y$ will not be a point in $\mathcal{C}\cap\left(\mathcal{C}+x\right).$ 
\begin{rem}
The description above shows that, if $x$ has a finite $n-$ary expansion,
then \[
\mathcal{C}\cap\left(\mathcal{C}+x\right)=E\cup F,\]
where $E$ is a finite, perhaps empty, union of sets similar to $\mathcal{C}$
and $F$ is a finite, perhaps empty, set. Hence, if $x$ has a finite
$n-$ary expansion, then the dimension $\mathcal{C}\cap\left(\mathcal{C}+x\right)$
is either $0$ or $\log m/\log n.$ Consequently, to prove Theorem
\ref{thm:non-uniform} we must consider $x$ that do not have finite
$n-$ary expansions. 
\end{rem}

\section{A Stepping Stone\label{sec:A-Stepping-Stone}}

The following provides the key step in the proofs of Theorem \ref{thm:non-uniform}
and Theorem \ref{thm:uniform} and is, perhaps, of independent interest.
\begin{prop}
\label{pro:Main-Step} If $\mathcal{D}$ satisfies the separation
condition $d_{j+1}-d_{j}\geq2$ for all $j=1,2,\ldots,m-1,$ then
given any $0\leq\alpha\leq1,$ there is an $x$ in $\mathcal{F}$
such that $\mathcal{C}\cap(\mathcal{C}+x)$ has Minkowski and Hausdorff
dimension equal to $\alpha\log_{n}m.$ This $x$ may be chosen not
to admit a terminating $n-$ary representation. 
\end{prop}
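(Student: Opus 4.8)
The plan is to build the translation $x$ digit-by-digit so that the transition analysis of Section~\ref{sub:Investigating} produces, at each level, a controlled number of surviving intervals, chosen so that the growth rate of $\mathcal{N}_{k}\bigl(\mathcal{C}\cap(\mathcal{C}+x)\bigr)$ is exactly $m^{\alpha}$ on average. Recall from~(\ref{eq:IntervalDescription}) and the Remark at the end of Section~\ref{sub:Investigating} that, when $x$ has no terminating $n$-ary expansion, $\mathcal{C}\cap(\mathcal{C}+x)$ is the nested intersection $\bigcap_{k}\bigcup I$ over intervals $I$ in $\mathcal{C}_k$ that are in the interval case or the potential interval case. When an interval $I$ is in the interval case, refining both $I$ and the coinciding interval $J$ in $\mathcal{C}_{k}+\lfloor x\rfloor_{k}$ and then translating by $x_{k+1}/n^{k+1}$ again leads to a mix of interval-case and potential-interval-case children; the number and type of these children is governed entirely by the next digit $x_{k+1}$ together with the combinatorics of $\mathcal{D}$. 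So the first step is to set up this transition bookkeeping carefully: for each possible "state" of an interval (interval case vs.\ potential interval case) and each choice of the next digit, count how many interval-case and potential-interval-case children are produced, and record the resulting length of the surviving sub-interval $I\cap(J+x-\lfloor x\rfloor_k)$.

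The second step is to identify, from this bookkeeping, two extreme digit choices. One choice of digit (roughly, the one that keeps the overlap $x-\lfloor x\rfloor_k$ forcing alignment with all $m$ refined subintervals) makes an interval-case interval spawn $m$ interval-case children — this is the "$\mathcal{C}$-like" behavior giving local growth rate $m$, i.e.\ contributing $1$ to the exponent. The opposite choice makes an interval-case interval spawn just one child (or a bounded number independent of $m$) — local growth rate $1$, contributing $0$ to the exponent. The endpoint cases $\alpha=0$ and $\alpha=1$ are then essentially already handled by Section~\ref{sub:Investigating} combined with known facts about $\mathcal{C}$ itself (for $\alpha=1$ one can even take $x$ with $\lfloor x\rfloor_k=0$-like alignment; for $\alpha=0$, pick digits driving everything into the potential-interval case so that the intersection is a single point, then perturb to keep $x$ non-terminating while keeping the dimension $0$).

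The third and main step is the interpolation for $0<\alpha<1$. The idea is to interleave long blocks of "growth-rate-$m$" digits with long blocks of "growth-rate-$1$" digits, with block lengths $L_1<L_2<L_3<\cdots$ chosen so that the proportion of growth-rate-$m$ digits among the first $N$ digits tends to $\alpha$. Then at levels $k$ that fall at the ends of blocks one gets $\log\mathcal{N}_k/(k\log n)\to\alpha\log_n m$; one must choose the block lengths growing fast enough (e.g.\ geometrically, $L_{j+1}=2L_j$ or faster) that the ratio is well-behaved not just at block boundaries but along the whole sequence, so that the genuine limit (not merely limsup/liminf) of $\log\mathcal{N}_k/(k\log n)$ exists and equals $\alpha\log_n m$. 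This gives the Minkowski dimension. For the Hausdorff dimension, one shows it also equals $\alpha\log_n m$: the upper bound is automatic since $\dim_{\mathrm H}\leq\dim_{\mathrm M}$; for the lower bound one constructs a mass distribution on $\mathcal{C}\cap(\mathcal{C}+x)$ — distribute mass uniformly among surviving children at each transition — and applies the mass distribution principle, using the fact that the number of children and the lengths of the surviving sub-intervals are controlled uniformly within each block, so the local Hölder exponent of the measure is $\alpha\log_n m$ up to block-boundary corrections that vanish because the blocks grow.

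The step I expect to be the main obstacle is making the limit (as opposed to limsup/liminf) of $\log\mathcal{N}_k/(k\log n)$ actually exist and coincide for both dimensions: within a block of constant digit-type the count $\mathcal{N}_k$ grows or stagnates monotonically in a predictable way, but one must check that the "wrong-direction" drift during a long stagnation block, divided by $k$, is swamped by the accumulated growth — this forces a careful choice of how fast the block lengths increase, and a careful estimate of the surviving-interval lengths (which shrink by a factor involving $x-\lfloor x\rfloor_k$, not just $1/n$) so that covering by $n$-ary intervals of length $1/n^k$ is accounted for correctly. A secondary technical point is that "potential interval case" intervals can themselves, upon refinement, transition back into the interval case depending on the next digit, so the two-state Markov-like bookkeeping must be verified to close up consistently; the separation condition $d_{j+1}-d_j\geq2$ is exactly what prevents the potentially-empty case from contaminating this and what makes the distance-multiple-of-$1/n^k$ argument of the Remark go through.
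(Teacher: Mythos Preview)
Your core idea---interleave a ``growth'' digit that multiplies the number of surviving intervals by $m$ with a ``stagnation'' digit that multiplies it by $1$, with proportions chosen so the average growth exponent is $\alpha$---is exactly what the paper does. But two of your implementation choices go astray.

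First, your prescription that the block lengths should grow \emph{fast} (geometrically or faster) is backwards. If the $j$th block has length $L_j$ and $L_{j+1}/\sum_{i\le j}L_i$ stays bounded away from $0$, then the running proportion of growth digits oscillates by a fixed amount around $\alpha$: at the end of a growth block the proportion overshoots, at the end of a stagnation block it undershoots, and the oscillation does \emph{not} die out. Hence $\log\mathcal N_k/(k\log n)$ has $\liminf<\limsup$ and the Minkowski dimension fails to exist. The paper sidesteps this entirely by taking $h_j=\lfloor j\alpha\rfloor$ and setting $x_j=0$ when $h_j=h_{j-1}+1$, $x_j=d_m$ when $h_j=h_{j-1}$; then the number of interval-case intervals at level $j$ is exactly $m^{h_j}$, and $h_j/j\to\alpha$ directly---no blocks, no oscillation, no limit issue.

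Second, your two-state (interval case / potential interval case) bookkeeping is unnecessary for this construction. Starting from the single interval case $\mathcal C_0\cap(\mathcal C_0+0)$ and using only the digits $0$ and $d_m$, every transition keeps every surviving interval in the interval case; potential interval cases never appear. So the combinatorics is one-dimensional, not a two-state chain.

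Your proposed use of the mass distribution principle for the Hausdorff lower bound is a legitimate alternative to the paper's Eggleston-style covering argument; either works once $\mathcal N_j=m^{h_j}$ and each level-$j$ interval refines to exactly $m^{h_{j+1}-h_j}$ level-$(j+1)$ intervals.
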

Let $0\leq\alpha\leq1$ be given. The proof is completed in two steps.
First we use the transition process to construct an $x$ such that
$\mathcal{C}\cap(\mathcal{C}+x)$ has Minkowski dimension $\alpha\log_{n}m.$
Then we show that for this $x$ the set $\mathcal{C}\cap(\mathcal{C}+x)$
also has Hausdorff dimension $\alpha\log_{n}m.$

\subsection{Construction of $x$ using the Minkowski dimension as a guide}

We begin the refinement process in the interval case $\mathcal{C}_{0}\cap\left(\mathcal{C}_{0}+0\right).$
The idea of the proof is, if $x_{j+1}=0,$ then transitioning from
$\mathcal{C}_{j}\cap(\mathcal{C}_{j}+\left\lfloor x\right\rfloor _{j})$
to $\mathcal{C}_{j+1}\cap(\mathcal{C}_{j+1}+\left\lfloor x\right\rfloor _{j+1})$
multiplies the number of interval cases by $m,$ and if $x_{j+1}=d_{m}$
the transition multiplies the number of interval cases by one. In
either case no potential interval cases or potentially empty cases
appear. 

Let $h_{j}:=\left[j\alpha\right],$ then $h_{j}$ is a positive integer
such that $h_{j}\leq j\alpha<1+h_{j},$ and consequently, $h_{j}/j\to\alpha$
as $j\to\infty.$ Since $0\leq\alpha\leq1$ we have $h_{j}\leq h_{j+1}\leq1+h_{j}.$
Suppose $0<\alpha<1.$ For $j\geq1$ set\[
x_{j}=\begin{cases}
d_{m} & \text{if }h_{j}=h_{j-1}\\
0 & \text{if }h_{j}=1+h_{j-1}\end{cases}.\]
Then the number of interval cases in $\mathcal{C}_{j}\cap(\mathcal{C}_{j}+\left\lfloor x\right\rfloor _{j})$
is $m^{h_{j}}.$ Since $\mathcal{C}\cap\left(\mathcal{C}+x\right)$
is a subset of $\mathcal{C}_{j}\cap(\mathcal{C}_{j}+\left\lfloor x\right\rfloor _{j})$
this provides an upper bound for the number of interval of length
$1/n^{j}$ needed to cover $\mathcal{C}\cap\left(\mathcal{C}+x\right):$
\begin{equation}
\mathcal{N}_{j}\left(\mathcal{C}\cap\left(\mathcal{C}+x\right)\right)\leq m^{h_{j}}.\label{eq:upper-bound}\end{equation}
To calculate the Minkowski dimension of $\mathcal{C}\cap\left(\mathcal{C}+x\right)$
it remains to check that any interval case in $\mathcal{C}_{j}\cap(\mathcal{C}_{j}+\left\lfloor x\right\rfloor _{j})$
leads to points in $\mathcal{C}\cap\left(\mathcal{C}+x\right)$ so
that the upper bound (\ref{eq:upper-bound}) for $\mathcal{N}_{j}(\mathcal{C}\cap\left(\mathcal{C}+x\right))$
is also a lower bound. But by the refinement process each interval
in $\mathcal{C}_{j}\cap(\mathcal{C}_{j}+\left\lfloor x\right\rfloor _{j})$
transitions to one or $m$ sub-intervals in $\mathcal{C}_{j+1}\cap(\mathcal{C}_{j+1}+\left\lfloor x\right\rfloor _{j+1}).$
Hence, it follows from the Nested Interval Theorem that each interval
in $\mathcal{C}_{j}\cap(\mathcal{C}_{j}+\left\lfloor x\right\rfloor _{j})$
has infinitely many points in common with $\mathcal{C}\cap\left(\mathcal{C}+x\right).$
Using $h_{j}/j\to\alpha,$ we conclude \begin{equation}
\frac{\log\mathcal{N}_{j}\left(\mathcal{C}\cap\left(\mathcal{C}+x\right)\right)}{j\log n}=\frac{h_{j}\log m}{j\log n}\to\alpha\frac{\log m}{\log n}\label{eq:limit}\end{equation}
as $j\to\infty.$ 

We can change some of the digits $x_{j}=0$ to $x_{j}=d_{m}$ or visa
versa, as long as the limit (\ref{eq:limit}) remains unchanged. That
is, we can make changes of this nature on a sparse set of $j$'s.
In particular, if necessary, we can ensure that $x$ neither terminates
in repeating $0$'s nor in repeating $d_{m}$'s. In particular, this
observation allows us to deal with the cases $\alpha=0$ and $\alpha=1,$
using arguments presented above. The details are left for the reader.

\subsection{Hausdorff dimension\label{sub:Hausdorff-dimension}}

It is not immediate that the Hausdorff dimension of $\mathcal{C}\cap\left(\mathcal{C}+x\right)$
equals its Minkowski dimension because the set $\mathcal{C}\cap\left(\mathcal{C}+x\right)$
need not be self-similar, see Section \ref{sec:Open-Questions}. Consequently,
it remains to check that the Hausdorff dimension of $\mathcal{C}\cap\left(\mathcal{C}+x\right)$
is bounded below by $\alpha\log_{n}m.$ The argument below is inspired
by an argument due to Eggleston \cite{Egg49}.

Let $\mathcal{A}$ be the collection of $n-$ary intervals introduced
as part of the construction of $x.$ Then, the subcollection $\mathcal{A}_{j}$
of $n-$ary intervals in $\mathcal{A}$ of length $1/n^{j}$ contains
$m^{h_{j}}$ members, where $h_{0}=0,$ and $h_{j}/j\to\alpha$ as
$j\to\infty.$ And, by equality in (\ref{eq:upper-bound}), each interval
in $\mathcal{A}_{j}$ refines to $m^{h_{j+1}-h_{j}}$ intervals in
$\mathcal{A}_{j+1}.$ Let $\beta<\alpha\log_{n}m.$ Then, $m^{h_{j}/j}\to m^{\alpha}$
implies $\sum_{j=0}^{\infty}n^{j\beta}/m^{h_{j}}<\infty.$  Let $N$
be an integer such that \begin{equation}
\sum_{j=N}^{\infty}\frac{n^{j\beta}}{m^{h_{j}}}<\frac{1}{2}.\label{eq:1}\end{equation}
Let $\mathcal{U}$ be a collection of $n-$ary intervals covering
$\mathcal{C}\cap\left(\mathcal{C}+x\right)$ and whose lengths are
at most $1/n^{N}.$ We will show that $\sum_{I\in\mathcal{U}}|I|^{\beta}\geq1.$
Consequently, the Hausdorff dimension of $\mathcal{C}\cap\left(\mathcal{C}+x\right)$
is bounded below by $\beta.$ Since $\beta<\alpha\log_{n}m$ is arbitrary,
it follows that the Hausdorff dimension of $\mathcal{C}\cap\left(\mathcal{C}+x\right)$
is bounded below by $\alpha\log_{n}m.$ The proof is by contradiction.
Suppose \begin{equation}
\sum_{I\in\mathcal{U}}|I|^{\beta}<1.\label{eq:2}\end{equation}
Remove the intervals from $\mathcal{U}$ that do not intersect $\mathcal{C}\cap\left(\mathcal{C}+x\right).$
If an interval $I$ in $\mathcal{U}$ is not in $\mathcal{A},$ but
shares an endpoint with an interval $J$ in $\mathcal{A},$ replace
$I$ by $J.$ Making these changes to $\mathcal{U}$ will not increase
the sum in (\ref{eq:2}), hence we can assume $\mathcal{U}$ is a
subset of $\mathcal{A}.$ Let $\mathcal{U}_{j}$ be the intervals
in $\mathcal{U}$ of length $1/n^{j}.$ By convergence of the sum
in (\ref{eq:2}) the set $\mathcal{U}_{j}$ is finite. If $\#\mathcal{U}_{j}$
denotes the number of intervals in $\mathcal{U}_{j},$ then \[
\left(\#\mathcal{U}_{j}\right)\frac{1}{n^{j\beta}}=\sum_{I\in\mathcal{U}_{j}}|I|^{\beta}<1\]
by (\ref{eq:2}). Consequently, \begin{equation}
\sum_{I\in\mathcal{U}_{j}}|I|=\left(\#\mathcal{U}_{j}\right)\frac{1}{n^{j}}<\frac{n^{j\beta}}{n^{j}}.\label{eq:3}\end{equation}
If $j\leq k,$ then any interval in $\mathcal{U}_{j}$ is an interval
in $\mathcal{A}_{j}$ and refines to $m^{h_{k}-h_{j}}$ intervals
in $\mathcal{A}_{k}$. Hence every interval in $\mathcal{U}_{j}$
covers exactly $m^{h_{k}-h_{j}}$ intervals in $\mathcal{A}_{k}.$
So, if $\mathcal{B}_{j,k}$ is the intervals in $\mathcal{A}_{k}$
covered by intervals in $\mathcal{U}_{j},$ then \[
\sum_{I\in\mathcal{B}_{j,k}}|I|=\frac{m^{h_{k}-h_{j}}}{n^{k-j}}\sum_{J\in\mathcal{U}j}|J|.\]
The factor $1/n^{k-j}$ appears since $|I|=1/n^{k}$ and $|J|=1/n^{j}.$
So by (\ref{eq:3}) \[
\sum_{I\in\mathcal{B}_{j,k}}|I|<\frac{m^{h_{k}-h_{j}}}{n^{k-j}}\frac{n^{j\beta}}{n^{j}}.\]
Hence \[
\sum_{j=N}^{k}\sum_{I\in\mathcal{B}_{j,k}}|I|<\frac{m^{h_{k}}}{n^{k}}\sum_{j=N}^{k}\frac{n^{j\beta}}{m^{h_{j}}}<\frac{1}{2}\frac{m^{h_{k}}}{n^{k}}=\frac{1}{2}\sum_{I\in\mathcal{A}_{k}}|I|.\]
Where the last inequality used (\ref{eq:1}). Consequently, there
are intervals in $\mathcal{A}_{k}$ disjoint from $\cup_{j=N}^{k}\cup_{I\in\mathcal{U}_{j}}I.$
Let $\mathcal{H}_{k}$ be the union of the intervals in $\mathcal{A}_{k}$
that are disjoint for $\cup_{j=N}^{k}\cup_{I\in\mathcal{U}_{j}}I.$
The $\mathcal{H}_{k+1}$ is a subset of $\mathcal{H}_{k},$ hence,
by compactness, $\cap_{k=N}^{\infty}\mathcal{H}_{k}$ is non-empty.
Any point in $\cap_{k=N}^{\infty}\mathcal{H}_{k}$ is a point in $\mathcal{C}\cap\left(\mathcal{C}+x\right)$
not covered by any interval in $\mathcal{U}.$ Contradicting that
$\mathcal{U}$ is a cover of $\mathcal{C}\cap\left(\mathcal{C}+x\right).$ 

We have shown $\dim_{\mathrm{M}}\mathcal{C}\cap\left(\mathcal{C}+x\right)\leq\dim_{\mathrm{H}}\mathcal{C}\cap\left(\mathcal{C}+x\right).$
Consequently, the Hausdorff and Minkowski dimensions of $\mathcal{C}\cap\left(\mathcal{C}+x\right)$
are equal.

\section{Proof of Theorem \ref{thm:non-uniform}\label{sec:Proof-of-Theorem Main}}

Since $d_{m}<n-1,$ the largest element of $\mathcal{C},$ that is
$0.d_{m}d_{m}d_{m}\cdots,$ is $<1.$ Hence, if $0\leq y\leq1$ is
such that $\mathcal{C}\cap\left(\mathcal{C}+y\right)$ is non-empty,
then $y<1.$ 

Let $0\leq y<1$ be such that $\mathcal{C}\cap\left(\mathcal{C}+y\right)$
is non-empty. Let $\varepsilon>0$ be given. Write $y=0.y_{1}y_{2}\cdots.$
Pick $k$ so large that $1/n^{k}<\varepsilon.$ Let $x_{j}=y_{j}$
for $j=1,2,\ldots,k.$ Then no matter how we determine $x_{j}$ for
$j>k,$ we have $|x-y|<\varepsilon,$ where $x=0.x_{1}x_{2}\cdots.$
We will ensure that $x$ does not have a terminating $n-$ary expansion.

\subsection{Interval Cases}

Suppose there is an interval in $\mathcal{C}_{k}\cap(\mathcal{C}_{k}+\left\lfloor x\right\rfloor _{k}),$
then at least one interval in $\mathcal{C}_{k}$ is in the interval
case. Setting $x_{k+1}=0,$ each interval in $\mathcal{C}_{k}\cap(\mathcal{C}_{k}+\left\lfloor x\right\rfloor _{k})$
transitions to $m$ intervals in $\mathcal{C}_{k+1}\cap(\mathcal{C}_{k+1}+\left\lfloor x\right\rfloor _{k+1})$
and, using that $d_{m}<n-1$ we see that all potential interval cases
and potentially empty cases transitions to empty cases. Hence we now
have interval cases only. The transition to interval cases only, is
illustrated in Figure \ref{fig:Potential-zero}, in the case where
$n=7$ and $\mathcal{D}=\left\{ 0,3,5\right\} .$ The two long lines
represents the interval $I$ in $\mathcal{C}_{k}$ and the interval
$J$ in $\mathcal{C}_{k}+\left\lfloor x\right\rfloor _{k}.$ Depending
on whether the line on the left is $I$ or $J$ Figure \ref{fig:Potential-zero}
illustrates the potentially empty or the potential interval case.
The heavier lines are the refinements of the two intervals. For clarity
one line is slightly elevated compared to the other line. 

\begin{figure}[h]
\includegraphics{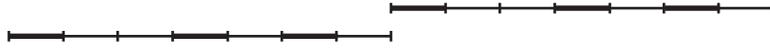}

\caption{\label{fig:Potential-zero}Setting $x_{k+1}=0$ and refining the intervals}

\end{figure}
We are left with a finite number of interval cases in $\mathcal{C}_{k+1}.$
We can apply Proposition \ref{pro:Main-Step} to, a self-similar copy
of, one of these intervals cases to arrive at the $x_{j}$ for $j>k+1.$
Since all the interval cases essentially are identical, it follows
that $\mathcal{C}\cap\left(\mathcal{C}+x\right)$ is a finite union
of sets whose Minkowski and Hausdorff dimensions both equal $\alpha\log_{n}m.$

\subsection{Remaining Cases}

Suppose $\mathcal{C}_{k}\cap(\mathcal{C}_{k}+\left\lfloor x\right\rfloor _{k})$
does not contain an interval, but does contain potential interval
cases. Let $I$ be an interval in $\mathcal{C}_{k}$ and $J$ an interval
in $\mathcal{C}_{k}+\left\lfloor x\right\rfloor _{k}$ such that the
right endpoint of $J$ coincides with the left endpoint of $I.$ Setting
$x_{k+1}=n-d_{m},$ then translates the right-most interval in the
refinement of $J$ onto the left-most interval in the refinement of
$I.$ Hence $\mathcal{C}_{k+1}\cap(\mathcal{C}_{k+1}+\left\lfloor x\right\rfloor _{k+1})$
contains an interval and we can now repeat the previous argument. 

Finally, suppose $\mathcal{C}_{k}\cap(\mathcal{C}_{k}+\left\lfloor x\right\rfloor _{k})$
does not contain an interval nor a potential interval case, but does
contain a potentially empty case. Then $\mathcal{C}_{k+1}\cap(\mathcal{C}_{k+1}+\left\lfloor x\right\rfloor _{k+1})$
will be empty even if $x_{k+1}=0,$ because $d_{m}<n-1.$ Contradicting
that $\mathcal{C}\cap\left(\mathcal{C}+y\right)$ is non-empty.

\section{Proof of Theorem \ref{thm:uniform}\label{sec:Proof-of-Theorem-uniform}}

If $d_{m}<n-1,$ this is a special case of Theorem \ref{thm:non-uniform},
hence we may assume that $d_{m}=n-1.$ 

The proof is similar to the proof of Theorem \ref{thm:non-uniform},
we will comment on the differences. If there is an interval in $\mathcal{C}_{k}\cap(\mathcal{C}_{k}+\left\lfloor x\right\rfloor _{k}),$
then there are no potential interval cases or potentially empty cases.
Hence there is no reason to begin by setting $x_{k+1}=0.$ The rest
of the proof in the interval case is unchanged. 

Suppose $\mathcal{C}_{k}\cap(\mathcal{C}_{k}+\left\lfloor x\right\rfloor _{k})$
does not contain an interval, but does contain potential interval
cases. In this case the argument remains unchanged. Note that $x_{k+1}=1.$

Finally, suppose $\mathcal{C}_{k}\cap(\mathcal{C}_{k}+\left\lfloor x\right\rfloor _{k})$
does not contain an interval nor a potential interval case, but does
contain a potentially empty case. In this case the argument presented
above does not work. Note $y_{j}=0$ for all $j>k,$ because otherwise
$\mathcal{C}\cap\left(\mathcal{C}+y\right)$ would be empty. Also,
$y>0,$ since we are not in the interval case. Let $l$ be the largest
subscript such that $y_{l}\neq0.$ Let $z=0.z_{1}z_{2}\cdots,$ where
$z_{j}=y_{j}$ when $j<l,$ $z_{l}=y_{l}-1,$ and $z_{j}=n-1$ when
$l<j.$ Then $\left\lfloor z\right\rfloor _{k}\to y$ and $\mathcal{C}_{k}\cap(\mathcal{C}_{k}+\left\lfloor z\right\rfloor _{k})$
is in the interval case for all $k\geq l.$ Hence, we can, once again,
apply the interval case argument.

\section{Geometry of $\mathcal{F}.$ \label{sec:Geometry-of-F}}

Let $m<n$ be positive integers and let $\mathcal{D}=\{d_{1},d_{2},\ldots,d_{m}\}$
be a set of at least two integers, such that $0=d_{1}<d_{2}<\cdots<d_{m}<n$
and let $\mathcal{C}$ be the corresponding deleted digits Cantor
set. The set $\mathcal{F}$ of all $x$ in the interval $[0,1],$
such that $\mathcal{C}\cap\left(\mathcal{C}+x\right)$ is non-empty
plays a starring role in Theorem \ref{thm:non-uniform}. The main
purpose of this section is to investigate the geometry of $\mathcal{F}.$ 

Since $\mathcal{C}$ is compact, $\mathcal{F}$ is also compact: 
\begin{lem}
$\mathcal{F}$ is compact and non-empty. \end{lem}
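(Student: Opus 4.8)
The plan is to prove compactness directly from the definition of $\mathcal{F}$ as a subset of $[0,1]$, using the compactness of $\mathcal{C}$. First I would observe that $\mathcal{F}\subseteq[0,1]$ is bounded, since $\mathcal{C}\subseteq[0,1]$ forces $\mathcal{C}\cap(\mathcal{C}+t)=\emptyset$ whenever $t>1$ (and also $t<0$ is excluded by the definition $t\geq0$). Thus it suffices to show $\mathcal{F}$ is closed. For this I would take a sequence $t_i\in\mathcal{F}$ with $t_i\to t$ and produce a point of $\mathcal{C}\cap(\mathcal{C}+t)$. For each $i$, pick $c_i\in\mathcal{C}\cap(\mathcal{C}+t_i)$, so that $c_i\in\mathcal{C}$ and $c_i-t_i\in\mathcal{C}$. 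Since $\mathcal{C}$ is compact (it is closed and bounded, being $\bigcap_{k}\mathcal{C}_k$ with each $\mathcal{C}_k$ a finite union of closed intervals), pass to a subsequence along which $c_i\to c$ for some $c\in\mathcal{C}$. Then $c_i-t_i\to c-t$, and since each $c_i-t_i\in\mathcal{C}$ and $\mathcal{C}$ is closed, $c-t\in\mathcal{C}$. Hence $c\in\mathcal{C}\cap(\mathcal{C}+t)$, so $t\in\mathcal{F}$, proving $\mathcal{F}$ is closed and therefore compact.

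It remains to check $\mathcal{F}$ is non-empty, which is immediate: $0\in\mathcal{F}$ because $\mathcal{C}\cap(\mathcal{C}+0)=\mathcal{C}\neq\emptyset$ (indeed $\mathcal{C}$ contains $0=0.d_1d_1d_1\cdots$).

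I do not anticipate any real obstacle here; the only point requiring the slightest care is making sure the limit argument is phrased using sequential compactness of $\mathcal{C}$ (which is a subset of $\mathbb{R}$, so metric) rather than an unjustified appeal to closedness of a set-valued map. An alternative, more conceptual phrasing: the map $(x,y)\mapsto x-y$ is continuous on $\mathcal{C}\times\mathcal{C}$, which is compact, so its image $\mathcal{C}-\mathcal{C}$ is compact; then $\mathcal{F}=(\mathcal{C}-\mathcal{C})\cap[0,\infty)$ is the intersection of a compact set with a closed set, hence compact, and contains $0$. Either version is short enough that I would simply write it out in full rather than merely sketch it.
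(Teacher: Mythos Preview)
Your proof is correct and follows essentially the same approach as the paper: both arguments note that $0\in\mathcal{F}$, that $\mathcal{F}$ is bounded, and then use compactness of $\mathcal{C}$ to show $\mathcal{F}$ is closed. The only cosmetic difference is that the paper shows the complement of $\mathcal{F}$ is open via the positive-distance argument $\varepsilon=\mathrm{dist}(\mathcal{C},\mathcal{C}+x)>0$, whereas you phrase it via sequential closedness (or, in your alternative, as $\mathcal{F}=(\mathcal{C}-\mathcal{C})\cap[0,\infty)$); all of these are equivalent one-line applications of the compactness of $\mathcal{C}$.
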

\begin{proof}
Clearly $0$ is in $\mathcal{F}$ and $\mathcal{F}$ is bounded, since
$\mathcal{C}$ is bounded. Hence it is sufficient to show that $\mathcal{F}$
is closed. Let $x>0.$ Suppose $\mathcal{C}\cap\left(\mathcal{C}+x\right)$
is empty. By compactness of $\mathcal{C},$\[
\varepsilon:=\mathrm{dist}\left(\mathcal{C},\mathcal{C}+x\right)>0.\]
Hence, $\mathcal{C}\cap\left(\mathcal{C}+y\right)$ is empty when
$|x-y|<\varepsilon.$ Consequently, the complement of $\mathcal{F}$
in the interval $[0,\infty)$ is an open set. 
\end{proof}
We begin by showing that $ $$\mathcal{G}:=\left(-\mathcal{F}\right)\cup\mathcal{F}$
is the attractor for a set of similarity transformations. 

Let $\Delta:=\mathcal{D}-\mathcal{D}=\left\{ d-e\mid d,e\in\mathcal{D}\right\} $
and\begin{equation}
\sigma_{\delta}(x):=\frac{x+\delta}{n}\label{eq:SigmaF}\end{equation}
for $\delta\in\Delta.$ Since $0$ is in $\mathcal{D},$ we have $\pm\mathcal{D}\subseteq\Delta.$
Since $\sigma_{d_{j}}=S_{j}$ for $j=1,2,\ldots,m.$ This family of
contractive similarities is closely related to the similarities used
above to generate $\mathcal{C}.$ 
\begin{lem}
$\mathcal{G}$ is the unique non-empty compact set invariant under
the contractions $\sigma_{\delta},$ $\delta\in\Delta.$ \end{lem}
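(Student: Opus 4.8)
The plan is to show that $\mathcal{G} = (-\mathcal{F}) \cup \mathcal{F}$ satisfies the Hutchinson fixed-point equation $\mathcal{G} = \bigcup_{\delta \in \Delta} \sigma_\delta(\mathcal{G})$; uniqueness then follows immediately from the Banach fixed-point theorem applied to the Hausdorff metric on non-empty compact subsets of $\mathbb{R}$, since each $\sigma_\delta$ is a contraction with ratio $1/n$. The previous lemma already gives that $\mathcal{G}$ is non-empty and compact, so the entire content is the set equation.

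First I would record a useful reformulation of $\mathcal{F}$: since $\mathcal{C} = \bigcap_k \mathcal{C}_k$ with $\mathcal{C}_{k+1} = \bigcup_{j} S_j(\mathcal{C}_k)$, one has $x \in \mathcal{F}$ iff there exist sequences $(x_k), (y_k)$ in $\mathcal{D}$ with $x = \sum_k (x_k - y_k)/n^k$; equivalently, $\mathcal{G} = \{\sum_{k=1}^\infty \delta_k/n^k : \delta_k \in \Delta\}$, the ``deleted digits'' set built from the digit set $\Delta$. This is essentially the self-similarity construction of a Cantor-type set for the alphabet $\Delta$ (note $\Delta \subseteq \{-(n-1), \ldots, n-1\}$, so the scaling by $n$ is the natural one and $\mathcal{G} \subseteq [-1,1]$). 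Once this description is in hand, the fixed-point equation is a one-line digit-shift computation: $\sum_k \delta_k/n^k = (\delta_1 + \sum_{k\ge 1}\delta_{k+1}/n^k)/n = \sigma_{\delta_1}(\sum_k \delta_{k+1}/n^k)$, so peeling off the first digit gives $\mathcal{G} = \bigcup_{\delta\in\Delta}\sigma_\delta(\mathcal{G})$.

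The one genuine point requiring care — and the step I expect to be the main obstacle — is justifying the series description of $\mathcal{G}$, i.e. that $t \in \mathcal{F}$ (equivalently $-t \in -\mathcal{F}$) precisely when $t$ admits an expansion $\sum \delta_k/n^k$ with all $\delta_k \in \Delta$. The inclusion ``such a $t$ lies in $\mathcal{G}$'' is the easy direction (exhibit explicit points of $\mathcal{C}$ and $\mathcal{C}+t$). For the converse one must handle the non-uniqueness of $n$-ary expansions carefully: if $z \in \mathcal{C}\cap(\mathcal{C}+t)$, write $z = \sum x_k/n^k = t + \sum y_k/n^k$ with $x_k, y_k \in \mathcal{D}$, giving $t = \sum(x_k-y_k)/n^k$ with each digit in $\Delta$; a terminating/non-terminating ambiguity in $z$ only affects finitely many digits and can be absorbed since $\Delta$ is symmetric and contains $0$. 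Alternatively, and perhaps more cleanly, one can avoid series manipulation entirely and argue directly at the level of the sets $\mathcal{F}$ and the maps $\sigma_\delta$: show $\sigma_\delta(\mathcal{F}) \subseteq \mathcal{G}$ for each $\delta$ by unwinding $\mathcal{C}\cap(\mathcal{C}+t)$ under the refinement step of Section~\ref{sec:Preliminaries}, and conversely show that any $x \in \mathcal{F}$ with, say, first $n$-ary digit $x_1$ and first ``translate digit'' $y_1$ lies in $\sigma_{x_1-y_1}(\mathcal{G})$. I would present the series version as the cleaner route, taking a sentence to note that finite-expansion ambiguities are harmless because $0 \in \Delta$ and $\Delta = -\Delta$.

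Finally, with $\mathcal{G} = \bigcup_{\delta \in \Delta}\sigma_\delta(\mathcal{G})$ established and $\mathcal{G}$ non-empty and compact (previous lemma), uniqueness is quoted from the standard theory of iterated function systems: the map $K \mapsto \bigcup_{\delta}\sigma_\delta(K)$ is a contraction of ratio $1/n$ on the complete metric space of non-empty compact subsets of $\mathbb{R}$ under the Hausdorff metric, so it has exactly one fixed point, namely $\mathcal{G}$. This completes the proof.
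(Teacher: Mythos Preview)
Your proposal is correct, but the paper's argument is considerably shorter and sidesteps the issue you flag as ``the main obstacle.'' The key observation you are missing is that $\mathcal{G}=\mathcal{C}-\mathcal{C}$: indeed $x\in\mathcal{G}$ iff $\mathcal{C}\cap(\mathcal{C}+x)\neq\emptyset$ iff $x=c-c'$ for some $c,c'\in\mathcal{C}$. With this in hand, the self-similarity $\mathcal{C}=\bigcup_{d\in\mathcal{D}}\sigma_d(\mathcal{C})$ together with the algebraic identity $\sigma_d(x)-\sigma_e(y)=\sigma_{d-e}(x-y)$ gives
\[
\mathcal{C}-\mathcal{C}=\bigcup_{d,e\in\mathcal{D}}\bigl(\sigma_d(\mathcal{C})-\sigma_e(\mathcal{C})\bigr)=\bigcup_{\delta\in\Delta}\sigma_\delta(\mathcal{C}-\mathcal{C}),
\]
which is the fixed-point equation in one line. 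Uniqueness is then the standard Hutchinson/Banach fixed-point fact, as you say.

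Your series route amounts to re-deriving $\mathcal{G}=\mathcal{C}-\mathcal{C}$ digitwise, and the worry about terminating versus non-terminating expansions is a red herring: membership in $\mathcal{C}$ is defined by the \emph{existence} of an expansion with digits in $\mathcal{D}$, so for $z\in\mathcal{C}\cap(\mathcal{C}+t)$ you simply choose one such expansion for $z$ and one for $z-t$, and subtract. No case analysis on ambiguities is needed. The paper's set-level manipulation buys you exactly this: it never unpacks into digits at all.
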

\begin{proof}
Note $\mathcal{G}$ is the set of real numbers $x$ such that $\mathcal{C}\cap\left(\mathcal{C}+x\right)$
is non-empty. Consequently, $\mathcal{G}=\mathcal{C}-\mathcal{C}=\left\{ s-t\mid s,t\in\mathcal{C}\right\} .$
Then the self-similarity construction of $\mathcal{C}$ leads to\begin{align*}
\mathcal{C}-\mathcal{C} & =\bigcup_{d,e\in\mathcal{D}}\sigma_{d}\left(\mathcal{C}\right)-\sigma_{e}\left(\mathcal{C}\right)\\
 & =\bigcup_{\delta\in\Delta}\sigma_{\delta}\left(\mathcal{C}-\mathcal{C}\right)\end{align*}
where the second equality used $\sigma_{d}(x)-\sigma_{e}(y)=\sigma_{d-e}(x-y).$ 
\end{proof}
This leads to a characterization of when $\mathcal{F}$ is an interval: 
\begin{prop}
\label{pro:F-interval}Write $\Delta=\left\{ \delta_{j}\mid j=1,2,\ldots,M\right\} ,$
where $\delta_{j}<\delta_{j+1}$ for $j=1,2,\ldots,M-1.$ Then $\mathcal{F}$
is an interval iff\[
2d_{m}\geq(n-1)\left(\delta_{j+1}-\delta_{j}\right)\]
for all $j=1,2,\ldots,M-1.$ If $\mathcal{F}$ is an interval, then
$\mathcal{F}=\left[0,d_{m}/(n-1)\right].$\end{prop}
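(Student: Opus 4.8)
The plan is to exploit the previous lemma, which identifies $\mathcal{G}=\mathcal{C}-\mathcal{C}=\bigcup_{\delta\in\Delta}\sigma_{\delta}(\mathcal{G})$ as the attractor of the IFS $\{\sigma_{\delta}\}_{\delta\in\Delta}$, and the symmetry $\mathcal{G}=-\mathcal{G}$, so that $\mathcal{F}=\mathcal{G}\cap[0,\infty)$. The largest element of $\mathcal{C}$ is $d_m/(n-1)$ and the smallest is $0$, so $\mathcal{G}\subseteq[-d_m/(n-1),\,d_m/(n-1)]$ and $d_m/(n-1)\in\mathcal{G}$; hence $\mathcal{F}$ is an interval iff $\mathcal{F}=[0,d_m/(n-1)]$ iff $\mathcal{G}=[-d_m/(n-1),\,d_m/(n-1)]$. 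So the whole proposition reduces to: the attractor $\mathcal{G}$ equals the full interval $[-d_m/(n-1),d_m/(n-1)]$ if and only if the stated gap condition holds.

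\textbf{Key steps.}
First I would reduce to a self-similarity statement. Applying the map $x\mapsto (n-1)x/d_m$ (or just working directly), $\mathcal{G}=[-d_m/(n-1),d_m/(n-1)]$ exactly when the sets $\sigma_{\delta_j}([-d_m/(n-1),d_m/(n-1)])$, $j=1,\dots,M$, cover $[-d_m/(n-1),d_m/(n-1)]$ with no gaps. Each $\sigma_{\delta_j}$ maps the big interval $[-L,L]$ (write $L:=d_m/(n-1)$) to the interval centered at $\delta_j/n$ of radius $L/n$, i.e. to $[(\delta_j-L)/n,(\delta_j+L)/n]$. These subintervals are listed in increasing order of center since $\delta_1<\cdots<\delta_M$. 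Their union is all of $[-L,L]$ iff (a) the leftmost reaches the left end, (b) the rightmost reaches the right end, and (c) consecutive ones overlap or abut. For (a): $\delta_1=-d_m$, so $(\delta_1-L)/n=(-d_m-L)/n=-L$ since $L=d_m/(n-1)$ gives $d_m+L=d_m+d_m/(n-1)=nL$; similarly (b) holds because $\delta_M=d_m$. For (c): the right end of the $j$-th piece is $(\delta_j+L)/n$ and the left end of the $(j{+}1)$-th is $(\delta_{j+1}-L)/n$, so abutting/overlap means $(\delta_{j+1}-L)/n\le(\delta_j+L)/n$, i.e. $\delta_{j+1}-\delta_j\le 2L=2d_m/(n-1)$, which is exactly $2d_m\ge(n-1)(\delta_{j+1}-\delta_j)$.

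\textbf{Both directions.}
If the gap condition holds, the three conditions (a),(b),(c) above show $\bigcup_j\sigma_{\delta_j}([-L,L])=[-L,L]$, so $[-L,L]$ is a nonempty compact invariant set; by uniqueness of the attractor, $\mathcal{G}=[-L,L]$, whence $\mathcal{F}=[0,L]$ is an interval. Conversely, if some gap satisfies $\delta_{j+1}-\delta_j>2L$, then between the $j$-th and $(j{+}1)$-th pieces of the first-level decomposition $\mathcal{G}=\bigcup_i\sigma_{\delta_i}(\mathcal{G})$ there is a genuine gap: since $\mathcal{G}\subseteq[-L,L]$ each $\sigma_{\delta_i}(\mathcal{G})\subseteq[(\delta_i-L)/n,(\delta_i+L)/n]$, the interval $\big((\delta_j+L)/n,\,(\delta_{j+1}-L)/n\big)$ is nonempty and disjoint from $\mathcal{G}$, while both endpoints $\delta_j/n,\delta_{j+1}/n$-neighborhoods of $\mathcal{G}$ are nonempty (e.g. $(\delta_i\pm 0)/n\cdot$ something — more carefully, $\delta_i/n \pm$ appropriate points of $\mathcal{G}$ lie in $\mathcal{G}$), so $\mathcal{G}$, hence $\mathcal{F}$, is disconnected and not an interval. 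The one point needing a little care is checking that each $\sigma_{\delta_i}(\mathcal{G})$ is genuinely nonempty and lies near $\delta_i/n$ so that the gap really separates two nonempty parts of $\mathcal{F}$; this follows since $0\in\mathcal{G}$ gives $\delta_i/n\in\sigma_{\delta_i}(\mathcal{G})$, and one checks which of the relevant $\delta_i/n$ are nonnegative, splitting into cases according to the sign, or simply noting $\mathcal{F}$ not an interval follows once $\mathcal{G}$ is not an interval by the symmetry $\mathcal{G}=\mathcal{F}\cup(-\mathcal{F})$ with $\mathcal{F}\cap(-\mathcal{F})=\{0\}$ possibly — that bookkeeping about whether a gap in $\mathcal{G}$ forces a gap in $\mathcal{F}$ rather than straddling $0$ is the only mildly delicate point, and it is handled by observing gaps in $\mathcal{G}$ come in $\pm$ pairs and the one with positive left endpoint lies in $\mathcal{F}$.
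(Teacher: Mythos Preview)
Your proof is correct and follows essentially the same approach as the paper: both reduce to showing $\mathcal{G}=[-L,L]$ (with $L=d_m/(n-1)$) iff the consecutive images $\sigma_{\delta_j}([-L,L])$ overlap, deriving the inequality $2d_m\ge(n-1)(\delta_{j+1}-\delta_j)$ from the endpoint comparison and invoking uniqueness of the attractor. You are more explicit than the paper about checking the extreme endpoints (your (a) and (b)) and about the passage from $\mathcal{G}$ back to $\mathcal{F}$, but the argument is the same; note that the bookkeeping you flag at the end is immediate once you observe $\mathcal{F}=\mathcal{G}\cap[0,\infty)$ and $0\in\mathcal{F}$, so $\mathcal{F}$ is an interval iff $\mathcal{G}$ is.
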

\begin{proof}
The largest element of $\mathcal{C}$ is $d_{m}/(n-1),$ hence the
smallest interval containing $\mathcal{G}$ is\[
I:=\left[-d_{m}/(n-1),d_{m}/(n-1)\right].\]
So $\mathcal{G}$ is an interval iff $\mathcal{G}=I.$ By construction
of $I$ we have $\bigcup_{\delta\in\Delta}\sigma_{\delta}(I)\subseteq I.$
Since $\sigma_{\delta}\left(\mathcal{G}\right)\subseteq\sigma_{\delta}(I),$
each of the intervals $\sigma_{\delta}(I),\delta\in\Delta$ has points
in common with $\mathcal{G}.$ Consequently, $\mathcal{G}$ is an
interval iff the intervals $\sigma_{\delta_{j}}(I)$ and $\sigma_{\delta_{j+1}}(I)$
overlap for all $j=1,2,\ldots,M-1.$ Hence $\mathcal{G}$ is an interval
iff \[
\sigma_{\delta_{j}}\left(d_{m}/(n-1)\right)\geq\sigma_{\delta_{j}+1}\left(-d_{m}/(n-1)\right)\]
for all $j=1,2,\ldots,M-1.$ By (\ref{eq:SigmaF}) this is equivalent
to the condition listed above. \end{proof}
\begin{cor}
\label{cor:interval} If $\mathcal{D}$ consists of even integers,
then $\mathcal{F}$ is an interval if and only if \[
\mathcal{D}-\mathcal{D}=\left\{ -n+1,\ldots,-2,0,2,\ldots,n-1\right\} .\]
In the affirmative case $n$ is odd and $\mathcal{F}=[0,1].$ \end{cor}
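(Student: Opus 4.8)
The plan is to derive Corollary~\ref{cor:interval} as a direct consequence of Proposition~\ref{pro:F-interval} by specializing to digit sets $\mathcal{D}$ consisting of even integers. First I would record the basic observation that if every $d_j$ is even, then every difference $\delta \in \Delta = \mathcal{D} - \mathcal{D}$ is even, so $\Delta \subseteq 2\mathbb{Z}$; in particular consecutive elements $\delta_j < \delta_{j+1}$ of $\Delta$ satisfy $\delta_{j+1} - \delta_j \geq 2$. I would then apply the criterion of Proposition~\ref{pro:F-interval}: $\mathcal{F}$ is an interval iff $2d_m \geq (n-1)(\delta_{j+1} - \delta_j)$ for all $j$. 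Since the largest gap is at least $2$, this forces $2d_m \geq 2(n-1)$, i.e.\ $d_m \geq n-1$, and because $d_m < n$ we get $d_m = n-1$; since $d_m$ is even, $n-1$ is even, so $n$ is odd.

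Next I would push the inequality to pin down $\Delta$ exactly. With $d_m = n-1$, the condition becomes $2(n-1) \geq (n-1)(\delta_{j+1}-\delta_j)$, i.e.\ $\delta_{j+1} - \delta_j \leq 2$ for every $j$. Combined with $\delta_{j+1} - \delta_j \geq 2$ (from evenness), every consecutive gap in $\Delta$ equals exactly $2$. Since $\Delta$ is symmetric about $0$ (as $0 \in \mathcal{D}$ gives $\pm\mathcal{D} \subseteq \Delta$) and $\max\Delta = d_m = n-1$, $\min\Delta = -(n-1)$, a consecutive-difference-$2$ arithmetic progression from $-(n-1)$ to $n-1$ is forced, which is precisely $\{-n+1, \ldots, -2, 0, 2, \ldots, n-1\}$. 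Conversely, if $\mathcal{D} - \mathcal{D}$ equals this set, then all gaps are $2$ and $d_m = n-1$ (the maximum of the set), so $2d_m = 2(n-1) = (n-1)\cdot 2 \geq (n-1)(\delta_{j+1}-\delta_j)$ holds for all $j$, and Proposition~\ref{pro:F-interval} gives that $\mathcal{F}$ is an interval, equal to $[0, d_m/(n-1)] = [0,1]$.

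I do not anticipate a serious obstacle here: the corollary is essentially a bookkeeping exercise combining the evenness constraint (which makes all gaps even, hence $\geq 2$) with the quantitative criterion of Proposition~\ref{pro:F-interval} (which caps the gaps at $2$ once $d_m = n-1$). The only point requiring a little care is justifying that $\Delta$ is genuinely the full symmetric arithmetic progression rather than merely having all-$2$ gaps over some sub-range — but this follows immediately from symmetry of $\Delta$ together with the endpoints $\pm(n-1)$ being attained. I would also state explicitly that $d_m/(n-1) = 1$ in the affirmative case to recover the clause $\mathcal{F} = [0,1]$, and note in passing that $n$ odd is implied (not assumed), matching the phrasing of the corollary.
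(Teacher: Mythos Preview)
Your proposal is correct and follows exactly the approach the paper intends: the corollary is stated immediately after Proposition~\ref{pro:F-interval} without a separate proof, so the expected argument is precisely the specialization you outline---evenness forces all gaps in $\Delta$ to be at least $2$, the criterion then forces $d_m=n-1$ (hence $n$ odd) and all gaps equal to $2$, which together with the endpoints $\pm(n-1)$ pins down $\Delta$; the converse and $\mathcal{F}=[0,1]$ drop out of $d_m/(n-1)=1$.
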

\begin{example}
It is a simple consequence of Corollary \ref{cor:interval} that $\mathcal{F}=[0,1],$
if $n=7$ and $\mathcal{D}=\{0,2,6\}.$ 
\begin{example}
Suppose $d_{j}=2(j-1)$ for $j=1,2,\ldots,m$ and $2(m-1)=n-1,$ then
$\mathcal{F}=[0,1]$ by Corollary \ref{cor:interval}. Let $f(t):=\underline{\dim}_{\mathrm{M}}\left(\mathcal{C}\cap\left(\mathcal{C}+t\right)\right)$
be the lower Minkowski dimension of $\mathcal{C}\cap(\mathcal{C}+t).$
By Theorem \ref{thm:uniform} $f$ maps any subinterval of $[0,1]$
onto the interval $[0,\log m/\log n].$ In particular, $f$ is discontinuous
at every point in the interval $[0,1].$ 
\end{example}
\end{example}
If there is a bounded open interval $I$ such that 

\begin{equation}
\bigcup_{\delta\in\Delta}\sigma_{\delta}(I)\subseteq I\text{ \quad(disjoint union)}\label{eq:open-interval-condition}\end{equation}
then $\mathcal{G}$ is a subset of the closure of $I.$ Hence, it
follows from the proof of Proposition \ref{pro:F-interval}, that
there is a bounded \emph{open} interval $I,$ such that (\ref{eq:open-interval-condition})
if and only if \begin{equation}
2d_{m}\leq(n-1)\left(\delta_{j+1}-\delta_{j}\right)\label{eq:open-set-condition}\end{equation}
for all $j=1,2,\ldots,M-1.$ In the affirmative case\[
I=\left(-d_{m}/(n-1),d_{m}/(n-1)\right).\]
In particular, (\ref{eq:open-set-condition}) implies that $\sigma_{\delta},$
$\delta\in\Delta$ satisfies the open set condition. Now $\mathcal{D}$
has $m$ elements and $\pm\mathcal{D}\subseteq\Delta,$ so, since
each digit in $\mathcal{D}$ is non-negative, $\Delta$ has at least
$2m-1$ elements. Hence the sparcity condition (\ref{eq:open-set-condition})
implies \[
\dim_{\mathrm{H}}\mathcal{F}=\log_{n}\#\Delta>\log_{n}\#\mathcal{D}=\dim_{\mathrm{H}}\mathcal{C}\]
by Hutchinson \cite{Hut81}, \cite{Fal85}. 

The following explores the structure of $\mathcal{F}$ under the conditions
of Theorem \ref{thm:uniform}. These conditions imply (\ref{eq:open-set-condition}). 
\begin{prop}
\label{pro:DescriptionOfF} If there is an integer $h>1$ dividing
all the digits in $\mathcal{D},$ then $\mathcal{F}$ is the interval
$[0,1]$ or there is a deleted digits Cantor set $\mathcal{B}$ such
that $\left(-\mathcal{F}\right)\cup\mathcal{F}=h\mathcal{B}-d_{m}/(n-1).$ \end{prop}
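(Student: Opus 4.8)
The plan is to leverage the self-similarity identity $\mathcal{G}=\mathcal{C}-\mathcal{C}=\bigcup_{\delta\in\Delta}\sigma_{\delta}(\mathcal{G})$ together with the arithmetic structure $h\mid d_j$ for all $j$. First I would observe that $\Delta=\mathcal{D}-\mathcal{D}$ consists entirely of integer multiples of $h$, say $\Delta=h\Delta'$ where $\Delta'=\{\delta/h:\delta\in\Delta\}$ is a set of integers containing $0$ and symmetric about $0$. The obstruction to $\Delta'$ being the digit set of a genuine deleted digits Cantor set (in base $n$) is that its elements are negative and that its largest element $2d_m/h$ may fail to be less than $n$. The first issue is handled by translating: shift $\mathcal{G}$ by $d_m/(n-1)$ so that the smallest point of the translate is $0$ and the largest is $2d_m/(n-1)$. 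The second issue is exactly where the dichotomy in the statement comes from.

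The key computation is to track what the maps $\sigma_\delta$ become under the conjugation $x\mapsto x+d_m/(n-1)$. Writing $\mathcal{B}:=(\mathcal{G}+d_m/(n-1))/h$, a direct substitution into $\sigma_\delta(x)=(x+\delta)/n$ shows that $\mathcal{B}=\bigcup_{\delta'\in\Delta'}\tau_{e(\delta')}(\mathcal{B})$ for maps of the form $\tau_e(y)=(y+e)/n$, where the new "digits" $e(\delta')$ are obtained from $\delta'$ by adding the fixed constant $d_m/(h(n-1))\cdot(n-1)=d_m/h$ to shift everything to be nonnegative — so the candidate digit set is $\mathcal{E}:=\Delta'+d_m/h=\{(\delta+d_m)/h:\delta\in\Delta\}$, a set of $\#\Delta$ nonnegative integers with smallest element $0$. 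The largest element of $\mathcal{E}$ is $2d_m/h$. Thus $\mathcal{B}$ is the attractor of the IFS $\{\tau_e:e\in\mathcal{E}\}$, and $\mathcal{B}$ is a bona fide deleted digits Cantor set $\mathcal{C}_{n,\mathcal{E}}$ precisely when $2d_m/h<n$, equivalently $2d_m<hn$. The hard part — or rather the only part requiring care — is the boundary case: if $2d_m/h\ge n$, I claim one is forced into $2d_m=(n-1)\cdot 2$, i.e. $h=2$, $d_m=n-1$, and then the hypothesis of Theorem~\ref{thm:uniform} with the consecutive-even structure (or a short separate argument using $2d_m\le(n-1)(\delta_{j+1}-\delta_j)$ failing) pins $\mathcal{D}-\mathcal{D}=\{-n+1,\dots,n-1\}$, whence $\mathcal{F}=[0,1]$ by Corollary~\ref{cor:interval}.

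Concretely the steps are: (1) record $\Delta=h\Delta'$ with $\Delta'\subseteq\mathbb{Z}$ symmetric, $0\in\Delta'$; (2) set $\mathcal{B}:=(\mathcal{F}\cup(-\mathcal{F})+d_m/(n-1))/h$ and verify via $\sigma_d(x)-\sigma_e(y)=\sigma_{d-e}(x-y)$ and a linear change of variables that $\mathcal{B}$ satisfies $\mathcal{B}=\bigcup_{e\in\mathcal{E}}(\mathcal{B}+e)/n$ for $\mathcal{E}=(\Delta+d_m)/h$, a set of integers with $\min\mathcal{E}=0$; (3) check $\#\mathcal{E}=\#\Delta\ge 2$ and $\max\mathcal{E}=2d_m/h$; (4) if $2d_m/h<n$, conclude by uniqueness of the attractor that $\mathcal{B}=\mathcal{C}_{n,\mathcal{E}}$ is a deleted digits Cantor set and we are done with the second alternative; (5) if $2d_m/h\ge n$, combine with $d_m\le n-1$ and $h\ge 2$ to get $2(n-1)\ge 2d_m\ge hn\ge 2n$, which is impossible unless all inequalities are equalities, forcing $h=2$ and $d_m=n-1$; then the spacing-sparcity inequality~\eqref{eq:open-set-condition} must fail somewhere, and by Corollary~\ref{cor:interval} (all digits even) this happens exactly when $\mathcal{D}-\mathcal{D}=\{-n+1,\dots,n-1\}$, giving $\mathcal{F}=[0,1]$, the first alternative.

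The only genuine obstacle is making step (5) airtight — confirming that the "bad" case collapses to exactly the interval case and nothing else sneaks through — but the margin is so tight ($2(n-1)\ge 2n$ is already absurd, and equality $h=2,d_m=n-1$ then forces $\mathcal{D}$ to be the full set of even residues $0,2,\dots,n-1$, with $n$ odd) that this should be routine. Everything else is a change of variables applied to the fixed-point equation for $\mathcal{G}$ already established in the lemma preceding Proposition~\ref{pro:F-interval}.
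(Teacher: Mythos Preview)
Your IFS-conjugation strategy is sound and arrives at the same candidate digit set $\mathcal{E}=(\Delta+d_m)/h$ that the paper obtains; the paper gets there more directly by writing $t=x-y$ with $x,y\in\mathcal{C}$, expanding both in base $n$, dividing through by $h$, and shifting digitwise by $e_m=d_m/h$, without invoking attractor uniqueness.

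The gap is in your case split. The condition for $\mathcal{B}=\mathcal{C}_{n,\mathcal{E}}$ to be a deleted digits Cantor set is $\#\mathcal{E}<n$, not merely $\max\mathcal{E}<n$. As you yourself note, the hypothesis $2d_m/h\ge n$ of step~(5) leads to $2(n-1)\ge 2d_m\ge hn\ge 2n$, which is absurd; so step~(5) is vacuous, and in fact $\max\mathcal{E}=2d_m/h\le n-1$ always holds. Thus everything lands in your step~(4), but step~(4) as written is wrong when $\mathcal{E}$ happens to be all of $\{0,\ldots,n-1\}$. The correct dichotomy---and the one the paper uses---is whether $\mathcal{E}=\{0,1,\ldots,n-1\}$ (then $\mathcal{B}=[0,1]$; this forces $2d_m/h=n-1$, hence $h=2$ and $d_m=n-1$, giving $\mathcal{G}=2[0,1]-1=[-1,1]$ and $\mathcal{F}=[0,1]$) or $\mathcal{E}\subsetneq\{0,\ldots,n-1\}$ (then $\mathcal{B}$ is a genuine deleted digits Cantor set). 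Your closing claim that $h=2$, $d_m=n-1$ alone ``forces $\mathcal{D}$ to be the full set of even residues'' is also false: with $n=9$ and $\mathcal{D}=\{0,4,8\}$ one has $h=2$, $d_m=8=n-1$, yet $\mathcal{E}=\{0,2,4,6,8\}$ has five elements and $\mathcal{B}$ is a Cantor set, not an interval. The implication runs only one way: $\mathcal{E}=\{0,\ldots,n-1\}$ forces $h=2$, $d_m=n-1$, $n$ odd, but not conversely.
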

\begin{proof}
Suppose $\mathcal{D}=\{d_{1},d_{2},\ldots,d_{m}\},$ where $e_{j}=d_{j}/h$
is an integer for $j=1,2,\ldots,m.$ Fix $t\in\mathcal{G}.$ Suppose
$x,y$ in $\mathcal{C}$ are such that $x=y+t.$ That is such that
$t=x-y.$ Consider the $n-$ary representations $x=0.x_{1}x_{2}\cdots$
and $y=0.y_{1}y_{2}\cdots.$ Rewrite $(x-y)/h=t/h$ as \[
\sum_{k=1}^{\infty}\frac{(x_{k}-y_{k})/h}{n^{k}}=\frac{t}{h}.\]
By making suitable choices for $x$ and $y$ we can arrange that $(x_{k}-y_{k})/h$
is any sequence of points in $\mathcal{E}=\{e_{i}-e_{j}\mid1\leq i,j\leq m\}.$
So setting $z_{k}=e_{m}+(x_{k}-y_{k})/h$ we can arrange that $z_{k}$
is any sequence in $\mathcal{E}+e_{m}.$ Consequently, $z=0.z_{1}z_{2}\cdots$
can be any number in the set $\mathcal{B}$ of $n-$ary numbers with
digits in $\mathcal{E}+e_{m}.$ Thus $t/h$ can be any number number
in $\mathcal{B}-\frac{e_{m}}{n-1}.$ Hence \[
\mathcal{G}=h\mathcal{B}-\frac{d_{m}}{n-1}.\]
Note, $\{0,e_{m},2e_{m}\}\subseteq\mathcal{E}+e_{m}\subseteq\{0,1,\ldots,2e_{m}\}.$ 

If $\mathcal{E}+e_{m}=\{0,1,\ldots,n-1\},$ then $\mathcal{B}$ is
the interval $[0,1].$ Otherwise, $\mathcal{B}$ is the deleted digits
Cantor set consisting of the $n-$ary numbers with digits from $\mathcal{E}+e_{m}.$ 
\end{proof}

\section{Examples\label{sec:Concluding-Remarks}}

In this section we will present some examples illustrating that to
obtain the conclusions of Theorem \ref{thm:non-uniform} we must impose
some conditions on the digit set $\mathcal{D}.$ In Theorem \ref{thm:non-uniform}
we impose two condition on $\mathcal{D}.$ The separation condition
$d_{j+1}-d_{j}\geq2$ and the condition $d_{m}<n-1.$ In Theorem \ref{thm:uniform}
the condition $d_{m}<n-1$ is replaced by a uniformity condition. 

In the first example $\mathcal{D}$ violates both of the conditions
imposed in Theorem \ref{thm:non-uniform}. 
\begin{example}
For any $n\geq2,$ if $\mathcal{D}=\left\{ 0,1,\ldots,n-1\right\} ,$
then $\mathcal{C}$ is the closed interval $[0,1].$ Consequently,
$\mathcal{C}\cap\left(\mathcal{C}+x\right)$ either has dimension
zero or one. 
\end{example}
In the following example, $\mathcal{D}$ does not satisfy the separation
condition $d_{j+1}-d_{j}\geq2,$ but $d_{m}<n-1.$ 
\begin{example}
\label{exa:Uniform-large-dm} If $n>4$ and digit sets is $\mathcal{D}=\{0,1,\ldots,n-2\},$
then $\mathcal{F}=\left[0,n-2/n-1\right]$ by Proposition \ref{pro:F-interval}.
The Hausdorff and lower Minkowski dimensions of $\mathcal{C}\cap\left(\mathcal{C}+x\right)$
are at least $\log_{n}(n-3)$ for any $x=0.x_{1}x_{2}\cdots$ such
that $x_{1}<n-2.$ \end{example}
\begin{proof}
Since the separation condition $d_{j+1}-d_{j}\geq2$ is not satisfied
we need a variant of the analysis in Section \ref{sec:Preliminaries}.
Fix $x=0.x_{1}x_{2}\cdots.$ Let $B_{k}$ be the number of intervals
in $\mathcal{C}_{k}$ that are both in the interval case and in the
potential interval case and let $I_{k}$ be the number of intervals
in $\mathcal{C}_{k}$ that are in the interval case but not in the
potential interval case. We will ignore the intervals in $\mathcal{C}_{k}$
that are in the potential interval case but not in the interval case.

Note that $B_{0}=0$ and $I_{0}=1.$ 

\begin{figure}[h]
\includegraphics{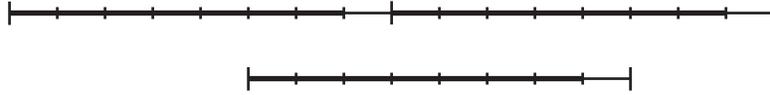}

\caption{Simultaneous interval and potential interval case, when $n=8$ and
$x_{k+1}=3$}

\label{Flo:n=00003D8-and-x=00003D3}
\end{figure}
The reader may verify that for any digit $x_{k+1}=0,1,\ldots,n-1$
we have \begin{equation}
B_{k+1}\geq(n-3)B_{k},\text{ for }k\geq0.\label{eq:example-2-0}\end{equation}
See Figure \ref{Flo:n=00003D8-and-x=00003D3} for the case $n=8$
and $x_{k+1}=3.$ The bottom line illustrates an interval in $\mathcal{C}_{k}$
and its refinement, the boldface line indicates the intervals retained
after refinement. The top line illustrates the two intervals in $\mathcal{C}_{k}+\left\lfloor x\right\rfloor _{k}.$ 

\begin{figure}[h]
\includegraphics{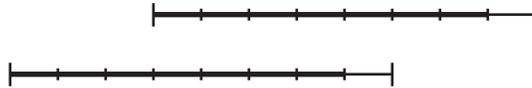}

\caption{Interval case, when $n=8$ and $x_{k+1}=3$}

\label{Flo:n=00003D8-and-x=00003D3-1}
\end{figure}
Also, as illustrated in Figure \ref{Flo:n=00003D8-and-x=00003D3-1}
\[
x_{k+1}<n-2\implies B_{k+1}\geq I_{k},\text{ for }k\geq0.\]
Consequently, if $x_{1}<n-2,$ then\begin{equation}
B_{k}\geq(n-3)^{k-1},\text{ for }k\geq1.\label{eq:example-2-1}\end{equation}
If $I$ is an interval in $\mathcal{C}_{k}$ that is both interval
case and the potential interval case, then the refinement of $I$
contains an interval of the same type in $\mathcal{C}_{k+1}.$ Hence,
by (\ref{eq:example-2-0}) and the Nested Interval Theorem, any interval
in $\mathcal{C}_{k},$ that is both in the interval case and in the
potential interval case, contains infinitely many points from $\mathcal{C}\cap\left(\mathcal{C}+x\right).$
Consequently, \begin{equation}
\mathcal{N}_{k}\left(\mathcal{C}\cap\left(\mathcal{C}+x\right)\right)\geq B_{k}\label{eq:example-2-2}\end{equation}
for all $k\geq1.$ 

Suppose $x_{1}<n-2.$ Combining (\ref{eq:example-2-1}) and (\ref{eq:example-2-2})
it follows that the lower Minkowski dimension of $\mathcal{C}\cap\left(\mathcal{C}+x\right)$
is at least $\log_{n}(n-3).$ 

Let $\mathcal{A}_{k}$ be the intervals in $\mathcal{C}_{k}$ that
are both in the interval and the potential interval case. Above we
showed the lower Minkowski dimension of $\mathcal{B}_{x}=\bigcap_{k=1}^{\infty}\bigcup_{I\in\mathcal{A}_{k}}I$
is bounded below by $\log_{n}(n-3).$ It follows from the the argument
in Sub-Section \ref{sub:Hausdorff-dimension} that the Hausdorff dimension
of $\mathcal{B}_{x}$ is also bounded below by $\log_{n}(n-3).$ Since
$\mathcal{B}_{x}\subseteq\mathcal{C}\cap\left(\mathcal{C}+x\right)$
the Hausdorff dimension of $\mathcal{C}\cap\left(\mathcal{C}+x\right)$
is also bounded below by $\log_{n}(n-3).$\end{proof}
\begin{rem}
If $n=3$ in Example \ref{exa:Uniform-large-dm} then $\mathcal{C}$
is the standard middle thirds Cantor set scaled by the factor $1/2.$
Hence the conclusions of Theorem \ref{thm:non-uniform} hold in this
case. It remains to consider the case $n=4.$ In this case an argument
similar to the one given for Example \ref{exa:Uniform-large-dm},
but also incorporating ideas from the argument given for Example \ref{exa:counter},
shows the conclusions of Theorem \ref{thm:non-uniform} fail.  
\end{rem}
The following example $\mathcal{D}$ satisfies the separation condition
$d_{j+1}-d_{j}\geq2,$ but $d_{m}=n-1.$ 
\begin{example}
\label{exa:counter} Let $n=8.$ If $\mathcal{D}=\{0,2,4,6\},$ then
we can apply Theorem \ref{thm:non-uniform}. If $\mathcal{D}=\{0,2,4,7\},$
then we cannot apply Theorem \ref{thm:non-uniform}. By Proposition
\ref{pro:F-interval} $\mathcal{F}=[0,1].$ We claim the Hausdorff
and lower Minkowski dimensions of $\mathcal{C}\cap\left(\mathcal{C}+x\right)$
are least $\log_{8}\sqrt{2}=1/6,$ for any $x=0.x_{1}x_{2}\cdots$
such that $x_{1}=3$ or $x_{1}=4.$ \end{example}
\begin{proof}
Since the separation condition $d_{j+1}-d_{j}\geq2$ is satisfied
we can use the analysis in Section \ref{sec:Preliminaries}. Fix $x=0.x_{1}x_{2}\cdots.$
Let $I_{k}$ be the number of intervals in $\mathcal{C}_{k}$ that
are in the interval case and let $P_{k}$ be the number of intervals
in $\mathcal{C}_{k}$ that are in the potential interval case.  

\begin{figure}[h]
\includegraphics{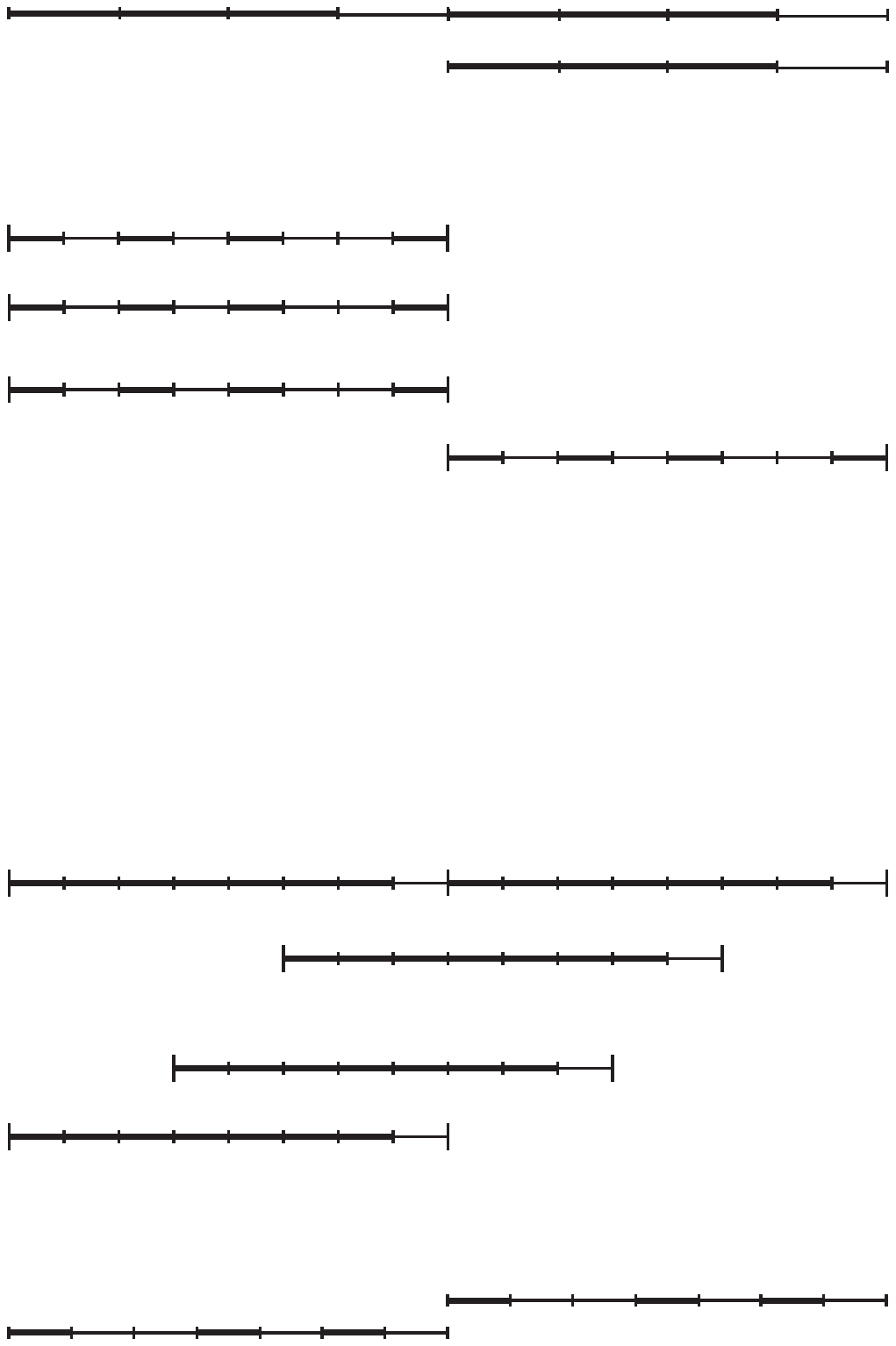}

\caption{Interval case with $x_{k+1}=0$}

\label{Flo:non-uniform-interval}
\end{figure}

We claim that for any $k\geq0$ at least one of the following four
sets of inequalities holds \begin{align*}
I_{k+1} & \geq2I_{k} & \text{and} &  & P_{k+1} & \geq P_{k}\\
I_{k+1} & \geq I_{k} & \text{and} &  & P_{k+1} & \geq2P_{k}\\
I_{k+1} & \geq2P_{k} & \text{and} &  & P_{k+1} & \geq I_{k}\\
I_{k+1} & \geq P_{k} & \text{and} &  & P_{k+1} & \geq2I_{k}\end{align*}
When $x_{k+1}=0$ this is illustrated in Figure \ref{Flo:non-uniform-interval}
and Figure \ref{Flo:non-uniform-potential}. The cases where $x_{k+1}$
is one of $1,2,5,6,7$ can be handled in the same way. 

\begin{figure}[h]
\includegraphics{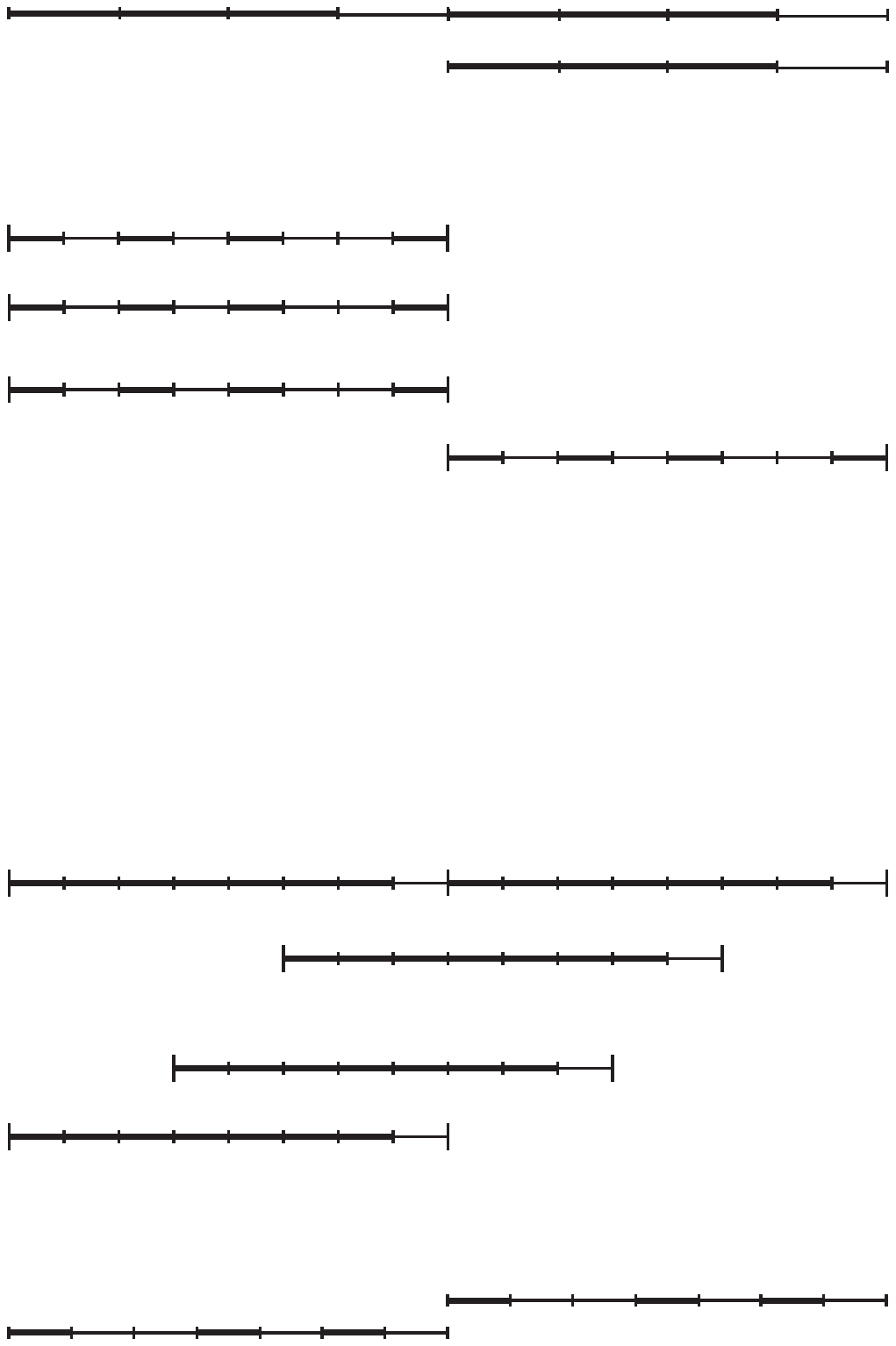}

\caption{Potential interval case with $x_{k+1}=0$}

\label{Flo:non-uniform-potential}
\end{figure}
When $x_{k+1}$ is $3$ or $4,$ illustrations similar to Figure \ref{Flo:non-uniform-interval}
and Figure \ref{Flo:non-uniform-potential} show that $I_{k+1}=I_{k}+P_{k}$
and $P_{k+1}=I_{k}+P_{k}.$ If $I_{k+1}\leq P_{k+1}$ the equalities
imply, for example, $I_{k+1}\geq2I_{k}$ and $P_{k+1}\geq P_{k}.$
A similar argument applies if $P_{k+1}<I_{k+1}.$

Assume $x_{1}=3$ or $x_{1}=4,$ then $I_{1}=P_{1}=1.$ Since at each
stage we multiply one of $I_{k}$ and $P_{k}$ by at least $2$ and
the other by at least $1,$ it follows that\[
I_{k}\geq2^{r_{k}}\text{ and }P_{k}\geq2^{k-r_{k}-1}\text{ for }k\geq1\]
where $r_{k}$ are integers satisfying $r_{1}=0$ and $r_{k}\leq r_{k+1}\leq1+r_{k}.$
Since either $r_{k}\geq(k-1)/2$ or $k-r_{k}-1\geq(k-1)/2$ we have
\[
\max\left\{ I_{k},P_{k}\right\} \geq2^{(k-1)/2}\]
when $k\geq1.$ By an argument in Example \ref{exa:Uniform-large-dm}\[
\mathcal{N}_{k}\left(\mathcal{C}\cap\left(\mathcal{C}+x\right)\right)\geq\max\left\{ I_{k},P_{k}\right\} .\]
Consequently, the lower Minkowski dimension of $\mathcal{C}\cap\left(\mathcal{C}+x\right)$
is at least $\log_{8}2^{1/2}.$ As in Example \ref{exa:Uniform-large-dm}
the argument from Sub-Section \ref{sub:Hausdorff-dimension} implies
that the Hausdorff dimension has the same lower bound.
\end{proof}

\section{Concluding Remarks\label{sec:Open-Questions}}

Suppose $n=3$ and $\mathcal{D}=\left\{ 0,2\right\} .$ In Section
\ref{sec:A-Stepping-Stone} we constructed $t=0.t_{1}t_{2}\cdots$
that do not admit a finite ternary representation such that 
$\mathcal{C}\cap\left(\mathcal{C}+t\right)$
has Hausdorff dimension $\log_{3}2.$ It follows from a result due to Nekka and Li \cite{NeLi02} than the $\log_{3}2$ dimensional Hausdorff measure of 
$\mathcal{C}\cap\left(\mathcal{C}+t\right)$
equals zero. Hence, by Hutchinson's theorem \cite{Hut81} 
$\mathcal{C}\cap\left(\mathcal{C}+t\right)$
cannot be a self-similar set satisfying the open set condition. So
for a deleted digits Cantor set $\mathcal{C}$ an interesting problem
is to characterize the $t$'s for which $\mathcal{C}\cap\left(\mathcal{C}+t\right)$
is self-similar. 

We showed that $\mathcal{G}=-\mathcal{F}\cup\mathcal{F}$ is self-similar
and if the digit set $\mathcal{D}$ is a subset of an arithmetic progression,
then $\mathcal{G}$ is similar to a deleted digits Cantor set. Does
there exists $n$ and $\mathcal{D}$ such that $\mathcal{F}$ is not
an interval, yet $\mathcal{F}$ is self-similar or even a deleted
digits Cantor set $\mathcal{F}=\mathcal{C}_{m,\mathcal{E}}?$ 

The examples in Section \ref{sec:Concluding-Remarks} suggests that,
when the assumptions in Theorem's \ref{thm:non-uniform} and \ref{thm:uniform}
fail, it may still be possible to find $a,$ perhaps in terms of $0.t_{1}t_{2}\cdots t_{n},$
such that for any $a\leq\alpha\leq1,$ there is a {}``completion''
$t=0.t_{1}t_{2}\cdots t_{n}t_{n+1}\cdots$ for which $\mathcal{C}\cap\left(\mathcal{C}+t\right)$
has dimention $\alpha\log_{n}m.$

\end{document}